\DeclareMathOperator{\Supp}{Supp}
\newcommand{\gothic}{\mathfrak}
\newcommand{\pd}{\operatorname{pd}}
\newcommand{\p}{{\gothic{p}}}
\newcommand{\q}{{\gothic{q}}}
\newcommand{\Spec}{\operatorname{Spec}}
\newcommand{\Hom}{\operatorname{Hom{}}}
\newcommand{\Ext}{\operatorname{Ext{}}}
\newcommand{\Tor}{\operatorname{Tor{}}}
\newcommand{\depth}{\operatorname{depth}}
\renewcommand{\hat}{\widehat}
\renewcommand{\phi}{\varphi}
\renewcommand{\to}{{\longrightarrow}}
\newcommand{\im}{\operatorname{im}}
\newcommand{\id}{\operatorname{id}}
\newcommand{\fd}{\operatorname{fd}}
\newtheorem{thm}{Theorem}[section]
\newtheorem{cor}[thm]{Corollary}
\newtheorem{prop}[thm]{Proposition}
\newtheorem{lemma}[thm]{Lemma}
\newtheorem{defn}[thm]{Definition}
\numberwithin{equation}{section}
\begin{document}
\title{The acyclicity of the Frobenius functor for modules of finite flat dimension}
\author{Thomas Marley and Marcus Webb}
\address{Department of
Mathematics\\
University of Nebraska-Lincoln\\Lincoln,  NE 68588-0130}
\email{ tmarley1@unl.edu}
\address{Department of
Mathematics\\
University of Nebraska-Lincoln\\Lincoln,  NE 68588-0130}
\email{ mwebb4@math.unl.edu}

\subjclass[2010]{ Primary
13D07; Secondary 13A35 }
\keywords{flat dimension,  Frobenius map, Bass number, flat cover}

\begin{abstract}
Let $R$ be a commutative Noetherian local ring of prime characteristic $p$ and $f:R\to R$ the Frobenius ring homomorphism. For $e\ge 1$ let
$R^{(e)}$ denote the ring $R$ viewed as an $R$-module via $f^e$. Results of Peskine, Szpiro, and Herzog
state that for finitely generated modules $M$,  $M$ has finite projective dimension if and only if $\Tor^R_i(R^{(e)},M)=0$ for all $i>0$ and all (equivalently, infinitely many) $e\ge 1$.  We prove this statement holds for arbitrary modules using the theory of flat covers and minimal flat resolutions.
\end{abstract}

\date{December 29, 2014}

\bibliographystyle{amsplain}

\maketitle

\begin{section}{Introduction}
\end{section}
Let $R$ be a commutative Noetherian ring of prime characteristic $p$  and $f:R\to R$ the Frobenius map.  For $e\ge 1$ let $R^{(e)}$ be the ring $R$ considered as an $R$-module via $f^e$;  i.e., for $r\in R, s\in R^{(e)}$, $r\cdot s:= r^{p^e}s$.  A classic result of Kunz \cite{K} states that $R$ is regular if and only if $R$ is reduced and $R^{(e)}$ is flat as an $R$-module for all (equivalently, some) $e\ge 1$.  Subsequently, Peskine and Szpiro \cite[Th\'eor\`em 1.7]{PS} proved that if $\mathbf{P}$ is a finite projective resolution of a finitely generated $R$-module $M$ then for all $e\ge 1$,  $R^{(e)}\otimes_R \mathbf{P}$ is a projective resolution of $R^{(e)}\otimes_R M$; that is, finitely generated modules of finite projective dimension are acyclic objects with respect to the Frobenius functors $R^{(e)}\otimes_R -$.   A year later Herzog \cite[Satz 3.1]{H} showed the converse holds in the case the Krull dimension of $R$ is finite;  namely, if $M$ is a finitely generated $R$-module
and $\mathbf{P}$ is a projective resolution of $M$ such that $R^{(e)}\otimes_R \mathbf{P}$ is acyclic for infinitely many integers $e$, then $M$ has finite projective dimension.  An interesting question is whether these results hold for arbitrary $R$-modules, not just finitely generated ones.  
In Corollary \ref{frob}(a) and Theorem \ref{converse}, we give an affirmative answer to this question:
\medskip

\begin{thm} \label{mainresult} Let $R$ be a Noetherian ring of prime characteristic and $M$  an $R$-module.  Then the following hold:
\begin{enumerate}[(a)]
\item If $\fd_R M<\infty$ then for all $i,e>0$,  $\Tor_i^R(R^{(e)}, M)=0$ and $\fd_{R^{(e)}} R^{(e)}\otimes_R M=\fd_R M$.
\item If $R$ has finite Krull dimension and $\Tor^R_i(R^{(e)},M)=0$ for all $i>0$ and infinitely many integers $e$, then $\pd_R M<\infty$.
\end{enumerate}
\end{thm}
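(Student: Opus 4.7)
\emph{Part (a).} Proceed by induction on $n = \fd_R M$. The base case $n=0$ is immediate: the Tor vanishing is automatic, and $R^{(e)} \otimes_R M$ is flat over $R^{(e)}$ by base change. For the inductive step, choose a short exact sequence $0 \to K \to F \to M \to 0$ with $F$ flat and $\fd_R K = n-1$; the long exact sequence for $\Tor^R(R^{(e)},-)$ propagates the vanishing from $F$ and $K$ to $M$, and tensoring yields $\fd_{R^{(e)}}(R^{(e)} \otimes_R M) \le n$. For the reverse inequality, the plan is to argue that applying $R^{(e)} \otimes_R -$ to a minimal flat resolution of $M$ produces a minimal flat resolution of $R^{(e)} \otimes_R M$ (using the compatibility of flat covers with the Frobenius twist), so the length is preserved.

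\emph{Part (b).} The strategy proceeds in two stages. First, show $\fd_R M < \infty$ under the hypothesis; then invoke the Raynaud--Gruson theorem (over a Noetherian ring of Krull dimension $d$, every flat module has projective dimension at most $d$) to upgrade to $\pd_R M < \infty$. For the first stage, take a minimal flat resolution of $M$ built from iterated flat covers, whose existence is guaranteed by Bican--El Bashir--Enochs and Enochs--Xu. The Tor vanishing hypothesis passes to every syzygy by dimension shifting, so it suffices to show that a module whose Tor with $R^{(e)}$ vanishes in positive degrees for infinitely many $e$ cannot admit an infinite minimal flat resolution. The structural properties of flat covers over Noetherian local rings in characteristic $p$, together with the Frobenius twist, should force the resolution to terminate.

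\emph{Main obstacle.} The crux of the argument is the first stage of (b): extracting finite flat dimension from the Frobenius Tor vanishing. Herzog's original argument for finitely generated modules uses Nakayama's lemma and the behavior of Tor with the residue field, neither of which transfers directly to arbitrary modules. The generalization must work within the cotorsion-theoretic framework of flat covers—detecting non-flatness of syzygies through some Frobenius-sensitive invariant analogous to a Bass number that cannot vanish if the minimal flat resolution is infinite. This is precisely where ``the theory of flat covers and minimal flat resolutions'' advertised in the abstract does its substantive work.
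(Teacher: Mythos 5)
The induction you propose for part (a) has a critical gap at $\Tor_1$. From the short exact sequence $0\to K\to F\to M\to 0$ with $F$ flat and $\fd_R K=n-1$, the long exact sequence gives $\Tor^R_i(R^{(e)},M)\cong\Tor^R_{i-1}(R^{(e)},K)=0$ for $i\ge 2$ by the inductive hypothesis, but for $i=1$ it only yields
$$0\to \Tor^R_1(R^{(e)},M)\to R^{(e)}\otimes_R K\to R^{(e)}\otimes_R F,$$
so $\Tor^R_1(R^{(e)},M)$ is the kernel of $K^{(e)}\to F^{(e)}$, which is precisely the statement one is trying to prove (that the inclusion of the first syzygy into the flat cover remains injective after Frobenius). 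When $n=1$ both $K$ and $F$ are flat and the inductive hypothesis says nothing, so the induction is vacuous exactly where the theorem has content. This is the entire difficulty that Peskine--Szpiro's acyclicity lemma was invented to handle. The paper's argument (Theorem \ref{main}) localizes to a prime minimal in the support of the offending Tor, takes a minimal flat resolution $\mathbf F$, uses Proposition \ref{EXnumbers} to show $\pi_i(m,M)=0$ for $i>\depth R$, which combined with Lemma \ref{lem} forces $\depth(S\otimes_R F_i)=\infty$ in high degrees, and then invokes the Peskine--Szpiro acyclicity lemma (Proposition \ref{ps}) to conclude the tensored complex is exact. None of this is captured by a formal dimension shift. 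Your proposed argument for the reverse inequality $\fd_{R^{(e)}}(M^{(e)})\ge\fd_R M$ --- that Frobenius preserves minimal flat resolutions --- is also unsubstantiated and is not what the paper does; the paper instead finds a prime $p$ with $\Tor^R_n(R/p,M)\ne 0$ a nonzero $k(p)$-vector space and checks directly that $S\otimes_R\Tor^R_n(R/p,M)\ne 0$.

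For part (b), your outline (reduce to $\fd_R M<\infty$ via Raynaud--Gruson, use minimal flat resolutions, show Tor vanishing forces termination) matches the paper's strategy at the level of headings, but you explicitly stop at ``this is precisely where the theory does its substantive work'' without supplying the Frobenius-sensitive invariant you correctly identify as necessary. The paper's actual execution involves several nonobvious reductions: passing to a faithfully flat extension with finite Frobenius, replacing $M$ by the (cotorsion) kernel of its flat cover, colocalizing via $\Hom_R(R_q,-)$ using Proposition \ref{localTor} to reduce to the case where $\fd_{R_q}\Hom_R(R_q,M)<\infty$ for all $q\ne m$, and then exploiting that the high-degree terms of the minimal flat resolution are completions $\hat{G_i}$ of nonzero free modules, where the key contradiction comes from Lemma \ref{completion}: $H^0_n$ of such a completion is nonzero, yet the minimality condition $\phi_i^{(e)}(F_i^{(e)})\subseteq m_e^{[p^e]}F_{i-1}^{(e)}$ forces the boundary modules into $n^t V_i$, killing $H^0_n$ by separatedness. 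As written, your proposal identifies the right framework but leaves the entire proof to be found.
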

\noindent  Here $\fd_R M$ and $\pd_RM$ denote the flat and projective dimensions of $M$, respectively.  Note that part (a) implies that if $\pd_R M<\infty$ then $\pd_{R^{(e)}} R^{(e)}\otimes_R M\le \pd_R M$  for all $e$ (although we do not know of any examples where the inequality is strict).
We also prove an analogue of Theorem \ref{mainresult} for injective dimension in the case the Frobenius map is finite (cf. Corollaries \ref{frob}(b) and \ref{dual3}).

A special case of part (a) of Theorem \ref{mainresult} is well-known  and is what inspired this investigation:   Suppose $\underline{x}=x_1,\dots,x_n$ is a regular sequence on $R$ and $\mathbf C=\mathbf C(\underline x; R)$ is the \v Cech complex on $R$ with respect to $x$.  Then $\mathbf C$ is a finite flat resolution of the local cohomology module $H^n_{(\underline{x})}(R)$ and $R^{(e)}\otimes_R \mathbf C\cong \mathbf C(\underline x; R^{(e)})\cong \mathbf C(\underline x^{p^e}; R)$  is acyclic since $x_1^{p^e},\dots, x_n^{p^e}$ is an $R$-sequence.

The proofs of Peskine-Szpiro and Herzog both reduce to the case $R$ is local and utilize minimal projective (free) resolutions.  The minimality condition for such projective resolutions can be expressed by saying that all the differentials are zero when tensored with the residue field.  It can be easily shown that projective resolutions of this type do not necessarily exist for arbitrary modules.   However, flat resolutions with this kind of minimality condition do exist for a large class of modules (i.e., cotorsion modules), which we show is  sufficient to prove Theorem \ref{mainresult}.   
The theory of flat covers, cotorsion modules,  and minimal flat resolutions, as developed by Enochs and Xu in \cite{E} and \cite{EX}, are essential ingredients in all our arguments.   In Section 2, we summarize some basic properties of these notions as well as prove some auxiliary results which are used in later sections.    Minimal flat resolutions have many properties analagous to those of minimal injective resolutions.  In particular, the flat modules appearing in a minimal flat resolution of a module $M$ are uniquely determined (up to isomorphism) by invariants  which we call the {\it Enochs-Xu} numbers of $M$.  The Enochs-Xu numbers of a module are in some sense the dual of the Bass numbers of a module.   In Section 3, we prove a vanishing result for Enochs-Xu numbers which will be crucial in our proof of part (a) of Theorem \ref{mainresult}.

Sections 4 and 5 are devoted to the proofs of parts (a) and (b), respectively, of Theorem \ref{mainresult}.   For both parts it is sufficient to consider the case when $R$ is a local ring, in which case every module of finite flat dimension has finite projective dimension by the Jensen-Raynaud-Gruson theorem (\cite[Proposition 6]{J} and \cite[Seconde partie, Th\'eor\`eme 3.2.6]{RG}).  Our strategy is  to apply the methods of
Peskine-Szpiro and Herzog to minimal flat resolutions.   There are several difficulties which arise:  the finitistic flat dimension may exceed the depth of the ring (cf. \cite[Corollary 5.3]{B});  minimal flat resolutions do not in general localize; and the modules appearing in minimal flat resolutions are not generally finitely generated.   We are able to overcome these difficulties by, in part, proving that the depths of the modules in degrees exceeding $\depth R$ in finite minimal flat resolutions are infinite, and showing that, in the case the Frobenius map is finite, the acyclicity of Frobenius for minimal flat resolutions of cotorsion modules commutes with the colocalization functor $\Hom_R (R_p, -)$.   And while the flat modules in question are not typically finitely generated, we are able to reduce to the case where they are completions of free $R$-modules.

\medskip

\noindent {\bf Acknowledgments:}  The authors would like to thank  Edgar Enochs for his help in our understanding of flat covers.  The authors also express their gratitude to everyone in the UNL algebra seminar, in particular Lucho Avramov, Peder Thompson, Mark Walker, and Wenliang Zhang,  for their useful feedback on this project.

\begin{section}{Flat covers and cotorsion modules}
\end{section}

In this section we collect the results we will regarding flat covers, cotorsion modules, and minimal flat resolutions.  We refer the reader to \cite{X} for background on this material.

\newpage

\begin{defn}{\rm  Let  $M$ be an $R$-module.  An $R$-homomorphism $\phi:F\to M$ is called a {\it flat cover} of $M$ if the following hold:
\begin{enumerate}[(a)] 
\item $F$ is flat;
\item for every map $\psi:G\to M$ with $G$ flat, there exists a homomorphism $g:G\to F$ such that $\psi=\phi g$; and 
\item if $h:F\to F$ satisfies $\phi=\phi h$ then $h$ is an isomorphism.  
\end{enumerate}}
\end{defn}
\noindent
By abuse of language, we sometimes refer to the module $F$ as the `flat cover' of $M$, rather than the homomorphism $\phi$.  In \cite{BEE}, it is proved that flat covers exists for all modules over all rings.  However, in this work we will only need the existence of flat covers for modules over commutative Noetherian rings of finite Krull dimension, which was proved in \cite{X2}.
It is easily seen that flat covers are surjective (since every module is a homomorphic image of a flat module), that a flat cover of a module is unique up to isomorphism,  and that the flat cover of a flat module is an isomorphism.  

\begin{defn}{\rm An $R$-module $M$ is called {\it cotorsion} if $\Ext^1_R(F,M)=0$ for every flat $R$-module $F$.}
\end{defn}
\noindent
It is easily seen that if $M$ is cotorsion then $\Ext^i_R(F,M)=0$ for all $i\ge 1$ and all flat $R$-modules $F$.  By \cite[Lemma 2.2 and Corollary]{E}, the kernel of a flat cover is cotorsion and a flat cover of a cotorsion module is cotorsion.  For an $R$-module $M$ we let $C_R(M)$ denote the cotorsion envelope of $M$, which exists by \cite[Theorem 3.4.6]{X}.  The cotorsion envelope of a flat module is flat by \cite[Theorem 3.4.2]{X}.  If $F$ is flat and cotorsion then $F\cong \prod_{\p\in \Spec R} T(p)$ where each $T(p)$ is the completion with respect to the $pR_p$-adic topology of a free $R_p$-module $G(p)$;   furthermore, the ranks of the free $R_p$-modules $G(p)$ are uniquely determined by $F$ (\cite[Theorem]{E}).  For each $p\in \Spec R$, let $\pi(p,F)$ denote the rank (possibly infinite) of $G(p)$.  

Given an $R$-module $M$, a {\it minimal flat resolution} of $M$ is a complex 
$$\mathbf{F}: \qquad \cdots \to F_i\xrightarrow{\partial_i} F_{i-1}\to \cdots \to F_0\to 0$$
such that $\operatorname{H}_i(\mathbf F)=0$ for $i>0$, $\operatorname{H}_0(\mathbf F)\cong M$, and for each $i$, the natural map $F_i\to \operatorname{coker} \partial_{i+1}$ is a flat cover.  It is clear that every $R$-module has a minimal flat resolution, and that any two minimal flat resolutions of $M$ are (chain) isomorphic.  Since the flat cover of a flat module is an isomorphism, it follows that if $\fd_RM=n<\infty$ then the length of any minimal flat resolution of $M$ is $n$.   Note that, by the remarks in the preceding paragraph, if ${\mathbf F}$ is a minimal flat resolution of $M$ then $F_i$ is cotorsion for all $i\ge 1$.  For each $i\ge 1$ and $p\in \Spec R$
we set $\pi_i(p,M):= \pi(p, F_i)$.   For $i=0$ and $p\in \Spec R$, we set $\pi_0(p,M):=\pi(p, C_R(F_0))$.  Note that if $M$ is cotorsion, so is $F_0$ and thus $\pi_0(p,M)=\pi(p,F_0)$ for all $p\in \Spec R$.  We call the the invariants $\pi_i(p,M)$ the {\it Enochs -Xu} numbers of $M$.   Note that $\fd_R M\le n$ if and only if $\pi_i(p,M)=0$ for all $p\in \Spec R$ and $i>n$.

We now state a theorem of Enochs and Xu:

\begin{thm}\label{EX}(\cite[Theorem 2.1 and 2.2]{EX}) Let $R$ be a Noetherian ring.  Then for any $R$-module $M$ we have:
\begin{enumerate}[(a)]
\item $\pi_i(p,M)=\pi_i(p,C_R(M))$ for all $i\ge 0$ and $p\in \Spec R$.
\item If $M$ is cotorsion then $\pi_i(p,M)=\dim_{k(p)} \Tor^{R_p}_i(k(p), \Hom_R(R_p, M))$ for all $i\ge 0$ and $p\in \Spec R$.
\end{enumerate}
\end{thm}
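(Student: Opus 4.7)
My plan is to treat minimal flat resolutions as the flat-side analogue of minimal injective resolutions, viewing $\Hom_R(R_p,-)$ as the ``colocalization'' functor dual to ordinary localization $(-)_p = R_p\otimes_R -$.

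For part~(a), I would compare the minimal flat resolutions of $M$ and of $C_R(M)$ directly, using the Wakamatsu-type property that the cokernel of the cotorsion envelope $M\hookrightarrow C_R(M)$ is flat. Given the flat cover $F_0\to M$, the cotorsion envelope $C_R(F_0)$ is again flat (and cotorsion), and there is a canonical map $C_R(F_0)\to C_R(M)$ obtained by factoring $F_0\to M\hookrightarrow C_R(M)$ through the envelope. I would check, using the defining lifting property of flat covers together with the uniqueness of envelopes, that this map is itself the flat cover of $C_R(M)$. Since $\pi_0(p,M)=\pi(p,C_R(F_0))$ by definition, this gives the equality in degree zero. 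Passing to first syzygies---which are cotorsion and, modulo a flat summand that does not affect the $\pi_i$'s, coincide for $M$ and $C_R(M)$---allows me to induct on $i$.

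For part~(b), I would apply $\Hom_R(R_p,-)$ to a minimal flat resolution $\mathbf F\to M$ of the cotorsion module $M$. Since every $F_i$ is cotorsion and $R_p$ is flat, $\Ext^1_R(R_p,F_i)=0$, so splicing together short exact sequences of syzygies inductively shows that $\Hom_R(R_p,\mathbf F)\to\Hom_R(R_p,M)$ is an exact complex of flat $R_p$-modules. Hence $\Tor^{R_p}_i(k(p),\Hom_R(R_p,M))$ is computed as the $i$-th homology of $k(p)\otimes_{R_p}\Hom_R(R_p,\mathbf F)$.

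The heart of the argument, and the step I expect to be the main obstacle, is identifying $k(p)\otimes_{R_p}\Hom_R(R_p,F_i)$ with $k(p)^{\pi_i(p,M)}$ with vanishing differentials. Using the structure theorem $F_i\cong\prod_q T_i(q)$ with $T_i(q)$ the $qR_q$-adic completion of a free $R_q$-module of rank $\pi_i(q,M)$, I would argue that only the factor $q=p$ survives the operation $\Hom_R(R_p,-)\otimes_{R_p}k(p)$: for $q=p$ one has $\Hom_R(R_p,T_i(p))=T_i(p)$ (since $T_i(p)$ is already an $R_p$-module), and the completion combined with $\otimes_{R_p}k(p)$ yields a $k(p)$-vector space of dimension exactly $\pi_i(p,M)$; for $q\ne p$ one must show that either $\Hom_R(R_p,T_i(q))=0$ (when some element of $q\setminus p$ is topologically nilpotent on $T_i(q)$) or what remains is killed by $\otimes_{R_p}k(p)$ (when $q\subsetneq p$ and $T_i(q)$ is $p$-divisible in the relevant sense). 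Minimality of $\mathbf F$---equivalently, that the image of each $F_{i+1}$ lies in the Jacobson-type radical of the cotorsion target---should then force the differentials to die after tensoring with $k(p)$, giving (b). Making the prime-by-prime vanishing precise is the subtle point, as it requires a careful interplay between the $qR_q$-adic topology on $T_i(q)$ and the localization $R\to R_p$.
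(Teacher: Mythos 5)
The paper does not prove this theorem; it is quoted verbatim from Enochs--Xu \cite[Theorems 2.1 and 2.2]{EX}, so there is no in-paper proof to compare against. Your outline does, however, reproduce the strategy of the cited source, and the structural ideas (treating $\Hom_R(R_p,-)$ as a colocalization, Wakamatsu's lemma for the envelope, the product decomposition $F\cong\prod_q T(q)$ and the prime-by-prime analysis of $\Hom_R(R_p,T(q))$ and of $k(p)\otimes_{R_p}T(q)$) are all the right ones. Two points are left genuinely unaddressed, and both carry real content.

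For part (a), the assertion that the extension $C_R(F_0)\to C_R(M)$ is again a flat cover is the heart of the matter, and the sketch elides the two things one actually has to establish: surjectivity, and the identification of $\ker(C_R(F_0)\to C_R(M))$ with $\ker(F_0\to M)$ on the nose (not merely up to a flat cotorsion summand). Surjectivity is not a formal consequence of the lifting properties: a priori the cokernel of $C_R(F_0)\to C_R(M)$ is only a quotient of the flat module $L=C_R(M)/M$. And if the first syzygy of $C_R(M)$ were $K\oplus L''$ with $L''$ flat and cotorsion but nonzero, then $\pi_1(p,C_R(M))=\pi_1(p,M)+\pi(p,L'')$ would differ from $\pi_1(p,M)$; so the clause ``modulo a flat summand that does not affect the $\pi_i$'s'' is exactly what needs justification, and it is false unless the summand is zero. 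The Enochs--Xu argument makes the kernels literally equal by a pushout/pullback comparison of the two envelopes; that diagram chase is where the theorem lives, and you should supply it rather than appeal to ``uniqueness of envelopes.''

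For part (b), the reduction to the single factor $T_i(p)$ is correct (and your dichotomy $q\not\subseteq p$ versus $q\subsetneq p$ is the right case split; note also that one needs $k(p)$ finitely presented over $R_p$ to commute $k(p)\otimes_{R_p}-$ past the product), but the step you flag as ``the subtle point'' --- that the induced differentials on $k(p)\otimes_{R_p}\Hom_R(R_p,\mathbf F)$ vanish --- is not a side issue: without it the homology is not the rank of $T_i(p)$, and there is no theorem. The needed minimality statement is the flat-cover analogue of Nakayama: if $\phi\colon F\to N$ is a flat cover with $N$ cotorsion and $K=\ker\phi$, then the image of $K$ in each reduced factor $T(q)/qT(q)$ is zero; otherwise one can split a nonzero flat cotorsion summand $T$ of $F$ off inside $K$, so that $F/T\to N$ is still a flat precover, and the composite $F\twoheadrightarrow F/T\to F$ (choosing a splitting) violates condition~(c) of the flat cover definition. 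Spelling out that splitting argument is what makes $\pi_i(p,M)=\dim_{k(p)}\Tor_i^{R_p}(k(p),\Hom_R(R_p,M))$ an equality rather than merely an inequality.
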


In particular, note that part (a) of this theorem implies that $\fd_RM=\fd_R C_R(M)$.

We now establish some additional results on cotorsion modules which are needed in sections 3 and 4.  We thank Edgar Enochs for showing us a proof of part (b) of the following lemma, which is implicit in \cite{EX}:

\begin{lemma} \label{cotorsion1}  Let $R$ be a Noetherian ring and $S$ a flat $R$-algebra.
\begin{enumerate}[(a)]
\item If $C$ is a cotorsion $R$-module and $T$ a flat $S$-module then $\Hom_R(T,C)$ is a cotorsion $S$-module.
\item If $F$ is a flat and cotorsion $R$-module and $p\in \Spec R$ then $\Hom_R(R_p,F)$ is a flat and cotorsion $R_p$-module.
\end{enumerate}
\end{lemma}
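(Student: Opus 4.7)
The plan for part (a) is to combine the Hom--tensor adjunction with the vanishing $\Ext^j_R(F',C)=0$ for all $j\ge 1$ and all flat $R$-modules $F'$. Given a flat $S$-module $G$, take a free $S$-resolution $P_\bullet\to G$. Since $T$ is flat over $S$ and $S$ is flat over $R$, the complex $T\otimes_S P_\bullet$ is a flat (over $R$) resolution of $T\otimes_S G$. The adjunction yields an isomorphism of cochain complexes
\[
\Hom_S(P_\bullet,\Hom_R(T,C))\cong\Hom_R(T\otimes_S P_\bullet,C).
\]
Because each $T\otimes_S P_i$ is flat over $R$ and $C$ is cotorsion, a standard double-complex argument (pairing $P_\bullet$ against an injective resolution of $C$) shows the right-hand complex computes $\Ext^{\ast}_R(T\otimes_S G,C)$. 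But $T\otimes_S G$ is flat over $S$, hence over $R$, so these higher Exts vanish. In particular $\Ext^1_S(G,\Hom_R(T,C))=0$, giving (a).

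For part (b), the cotorsion conclusion over $R_p$ is immediate from (a) applied with $S=T=R_p$. For flatness I would invoke the Enochs structure theorem recalled earlier in this section: write $F\cong\prod_{q\in\Spec R}T(q)$, where each $T(q)$ is the $qR_q$-adic completion of a (possibly infinite rank) free $R_q$-module. Since $\Hom_R(R_p,-)$ commutes with products, it suffices to analyze each factor. If $q\subseteq p$, then $R_q$ is a further localization of $R_p$, so $T(q)$ is already an $R_p$-module and evaluation at $1$ gives $\Hom_R(R_p,T(q))\cong T(q)$. If $q\not\subseteq p$, pick $x\in q\setminus p$; then $x$ is a unit in $R_p$, so any $R$-linear $\phi:R_p\to T(q)$ satisfies $\phi(1)\in\bigcap_n x^nT(q)\subseteq\bigcap_n(qR_q)^nT(q)$, which vanishes by Hausdorffness of $T(q)$ in its $qR_q$-adic topology. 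Hence $\Hom_R(R_p,F)\cong\prod_{q\subseteq p}T(q)$; each factor is flat over $R_q$ and therefore over $R_p$, and since $R_p$ is Noetherian any product of flat $R_p$-modules is flat.

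The step I expect to be most delicate is establishing $\Hom_R(R_p,T(q))=0$ when $q\not\subseteq p$: it hinges on the $qR_q$-adic Hausdorffness of $T(q)$, which does not follow formally from the Krull intersection theorem when the underlying free $R_q$-module has infinite rank, and must be extracted from the inverse-limit construction of $T(q)$ together with the maximality of $qR_q$ in the Noetherian local ring $R_q$.
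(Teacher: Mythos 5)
Your proof is correct, and the underlying ideas coincide with the paper's, though the packaging differs in a few places. In part (a), where the paper invokes the Grothendieck spectral sequence $\Ext^p_R(\Tor^S_q(T,A),C)\Rightarrow \Ext^{p+q}_S(A,\Hom_R(T,C))$ (checking that $T\otimes_S P$ is $\Hom_R(-,C)$-acyclic for projective $S$-modules $P$) and lets it collapse because $T$ is $S$-flat, you obtain the same collapse by applying Hom--tensor adjunction to a projective $S$-resolution $P_\bullet\to G$ and then running a double complex against an injective resolution of $C$; this is the same computation presented at a lower level. (One can even dispense with the double complex: the syzygies of the flat resolution $T\otimes_S P_\bullet$ of the flat module $T\otimes_S G$ are flat, so cotorsion of $C$ splices the short exact sequences directly.) In part (b), the identification $\Hom_R(R_p,F)\cong\prod_{q\subseteq p}T(q)$ and the argument that the factors with $q\not\subseteq p$ die are exactly as in the paper. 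Where you diverge is at the finish: the paper observes that $\prod_{q\subseteq p}T(q)$ is a product of completions of free modules over the localizations of $R_p$ and applies Enochs' structure theorem once to conclude flat \emph{and} cotorsion simultaneously, whereas you extract cotorsion from part (a) and argue flatness factor-by-factor plus Chase's theorem. Both work; the paper's is more economical, yours makes the dependence on part (a) explicit. Your final worry about Hausdorffness of $T(q)$ is sound in spirit but resolves routinely: since $qR_q$ is finitely generated, $\hat{G}/(qR_q)^n\hat{G}\cong G/(qR_q)^nG$ for all $n$, so an element of the inverse limit lying in every $(qR_q)^n\hat{G}$ has all coordinates zero; Krull intersection is not needed and infinite rank causes no trouble.
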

\begin{proof}
To prove (a), let $A$ be a flat $S$-module. As $S$ is flat over $R$,   any flat $S$-module is also flat as an $R$-module.   Thus,  $\Ext^i_R(P\otimes_S A, C)=0$ for all $i>0$ and all projective $S$-modules $P$.   Thus, we have a Grothendieck spectral sequence
$\Ext^p_R(\Tor_q^S(T,A), C)\Rightarrow \Ext^{p+q}_{S}(A, \Hom_R(T,C))$ (cf.  \cite[Theorem 11.54]{R}).
The spectral sequence collapses, giving  $\Ext^i_S(A, \Hom_R(T,C))\cong \Ext^i_R(T\otimes_S A, C)=0$ for $i>0$, as $T\otimes_S A$ is a flat $S$-module and therefore flat as an $R$-module as well.
Hence, $\Hom_R(T,C)$ is a cotorsion $S$-module.

For (b), as $F$ is flat and cotorsion we have $F\cong \prod_{\q\in \Spec R} T(q)$ where each $T(q)$ is the completion of a free $R_q$-module.  We'll show that $\Hom_R(R_p, F)\cong \prod_{q\subseteq p}T(q)$, which is flat and cotorsion by \cite[Theorem]{E}.
For each $q\in \Spec R$ let $\rho_q:F\to T(q)$ be the projection map.  Let $\phi\in \Hom_R(R_p, F)$.  Then, as $R_p$ is divisible by each element in $R\setminus p$, the same is true for the image $(\rho_q\phi)(R_p)$ for all $q$.   Suppose $q\not\subseteq p$.  Let $r\in q\setminus p$.
Then for all $n\ge 1$ we have $(\rho_q\phi)(R_p)\subseteq r^n(\rho_q\phi)(R_p)\subseteq q^nT(q)$.  As $T(q)$ is separated in the $qR_q$-adic topology, we conclude that $(\rho_q\phi)(R_p)=0$.  Thus, $\Hom_R (R_p, F)\cong \Hom_R (R_p, G)$, where $G=\prod_{q\subseteq p} T(q)$.
But since each $T(q)$ for $q\subseteq p$ is an $R_p$-module, $G$ is an $R_p$-module.  Thus, $\Hom_R(R_p, F)\cong G$.
\end{proof}

\begin{lemma} \label{cotorsion2} Let $R$ be a Noetherian ring of finite Krull dimension and $M$ an $R$-module. Suppose $M$ has a (left) cotorsion resolution $\mathbf C$;  i.e., there exists an exact sequence 
$$\cdots \to C_i\xrightarrow{\phi_i} C_{i-1}\to \cdots \to C_0\to M\to 0$$
where each $C_i$ is cotorsion.  Then
\begin{enumerate}[(a)]
\item $M$ is cotorsion.
\item For any flat $R$-module $T$, $\Hom_R(T,\mathbf C)$ is a cotorsion resolution of $\Hom_R(T,M)$.
\end{enumerate}
\end{lemma}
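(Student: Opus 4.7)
The plan is to leverage the Jensen--Raynaud--Gruson bound already invoked in the introduction: over a Noetherian ring of finite Krull dimension $d$, every flat module has projective dimension at most $d$, so $\Ext^j_R(F,-)=0$ for every flat $R$-module $F$ and every $j>d$. This uniform vanishing is precisely what will let a dimension-shifting argument along $\mathbf{C}$ terminate.

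For (a), I would split $\mathbf{C}$ into short exact sequences
\[
0 \to K_i \to C_i \to K_{i-1} \to 0
\]
with $K_{-1}:=M$ and $K_i:=\im \phi_{i+1}$ for $i\ge 0$. Fix a flat $R$-module $F$. Since each $C_i$ is cotorsion, $\Ext^j_R(F,C_i)=0$ for all $j\ge 1$, so the long exact sequence in $\Ext_R(F,-)$ collapses to dimension-shift isomorphisms $\Ext^j_R(F, K_{i-1}) \cong \Ext^{j+1}_R(F, K_i)$ for every $j\ge 1$. Iterating $d$ times yields
\[
\Ext^1_R(F,M) \;\cong\; \Ext^{d+1}_R(F, K_{d-1}) \;=\; 0,
\]
the last equality because $\pd_R F \le d$. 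Hence $M$ is cotorsion.

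For (b), I would first apply (a) to each truncation $\cdots \to C_{i+2} \to C_{i+1} \to K_i \to 0$, which is itself a cotorsion resolution of $K_i$, to conclude that every syzygy $K_i$ is cotorsion. Then for any flat $R$-module $T$ and any $i\ge 0$, $\Ext^1_R(T,K_i)=0$, so $\Hom_R(T,-)$ preserves exactness of each short exact sequence $0 \to K_i \to C_i \to K_{i-1} \to 0$; splicing these back together shows $\Hom_R(T,\mathbf{C})$ is exact in positive degrees with $\operatorname{H}_0 \cong \Hom_R(T,M)$. Finally, each term $\Hom_R(T,C_i)$ is cotorsion by Lemma \ref{cotorsion1}(a) taken with $S=R$, and the same applies to $\Hom_R(T,M)$.

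The single nontrivial ingredient, and the only place the hypothesis $\dim R<\infty$ is used, is the need for a uniform bound on the vanishing of $\Ext^j_R(F,-)$ that makes dimension shifting terminate; this is exactly the content of the Jensen--Raynaud--Gruson theorem. Everything else is standard homological diagram chasing combined with Lemma \ref{cotorsion1}.
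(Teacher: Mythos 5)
Your proof is correct and follows the same route as the paper's: break $\mathbf{C}$ into short exact sequences of syzygies, use cotorsion of each $C_i$ to get the dimension-shift isomorphisms, and terminate via the Jensen--Raynaud--Gruson bound $\pd_R F \le \dim R$; for (b), apply (a) to the truncations to see each $K_i$ is cotorsion, conclude $\Hom_R(T,-)$ preserves exactness of each short exact sequence, and invoke Lemma \ref{cotorsion1}(a) with $S=R$ for the cotorsion of the terms. Aside from a harmless shift in how you index the syzygies (images versus cokernels), this is the paper's argument.
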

\begin{proof} For each integer $i\ge 0$ let $K_i=\operatorname{coker} \phi_{i+1}$.  Let $T$ be a flat $R$-module.  Since
$\Ext^i_R(T, C_j)=0$ for all $i\ge 1$ and $j\ge 0$, we have that $\Ext^1_R(T,M)\cong \Ext^{i+1}_R(T, K_i)$ for all $i\ge 0$.  Let $n=\dim R$.
Then $\Ext^{n+1}_R(T, K_n)=0$ as $\pd_R T\le n$  by the Jensen-Raynaud-Gruson theorem (\cite[Proposition 6]{J} and \cite[Seconde partie, Th\'eor\`eme 3.2.6]{RG}).   Thus, $\Ext^1_R(T,M)=0$ and $M$ is cotorsion.   This proves (a).  

For part (b), first note that $\Hom_R(T,C_i)$ is cotorsion for all $i$ by Lemma \ref{cotorsion1}(a).  Let $K_i=\operatorname{coker}\phi_{i+1}$ as above.   Since each $K_i$ has a cotorsion resolution, $K_i$ is cotorsion by part (a).   Applying $\Hom_R(T,-)$ to the exact sequences $0\to K_{i+1}\to C_i\to K_i\to 0$, we obtain that
$$0\to \Hom_R(T, K_{i+1})\to \Hom_R(T, C_i)\to \Hom_R(T, K_i)\to 0$$
is exact for all $i$.  Splicing these short exact sequences, we see that $\Hom_R(T, \mathbf C)$ is a cotorsion resolution of $\Hom_R(T,M)$. 
\end{proof}

We remark that for any $R$-modules $L$, $M$, and $N$ there is a natural $R$-module homomorphism
$$L\otimes_R \Hom_R(M,N)\xrightarrow{\psi} \Hom_R (M, L\otimes_R N)$$
such that for $\ell\in L$, $f\in \Hom_R(M,N)$ and $m\in M$,  $\psi(\ell\otimes f)(m)=\ell\otimes f(m)$.

\begin{lemma} \label{iso} Let $R$ be a Noetherian ring of finite Krull dimension.  Let $M$, $T$, and $F$ be $R$-modules such that $M$ is finitely generated, $T$ is flat, and $F$ is flat and cotorsion.  Then the natural map $\psi: M\otimes_R \Hom_R(T, F)\to \Hom_R(T, M\otimes_R F)$
is an isomorphism.
\end{lemma}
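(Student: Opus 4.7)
The plan is to reduce to the case of a finitely generated free module (where $\psi$ is clearly an isomorphism) by using a free resolution of $M$, and then to apply the five-lemma to a suitable commutative diagram. The key input will be the cotorsion machinery from Lemma \ref{cotorsion2}.

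First I would observe that $\psi$ is natural in $M$, that $\psi_R$ is the identity (modulo the standard unit isomorphism), and that both the source and target of $\psi$ commute with finite direct sums in $M$, so $\psi_{R^n}$ is an isomorphism for every finite $n$. Since $R$ is Noetherian and $M$ is finitely generated, $M$ admits a free resolution $\cdots \to R^{n_1} \to R^{n_0} \to M \to 0$ by finitely generated free modules.

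Next I would tensor this resolution with $F$. Because $F$ is flat, the resulting complex $\cdots \to F^{n_1} \to F^{n_0} \to M\otimes_R F \to 0$ is exact. Each $F^{n_i}$ is a finite direct sum of copies of the cotorsion module $F$, hence is cotorsion, so this complex is a cotorsion resolution of $M\otimes_R F$. By Lemma \ref{cotorsion2}(a), this makes $M\otimes_R F$ cotorsion, and then by Lemma \ref{cotorsion2}(b) the complex $\cdots \to \Hom_R(T,F^{n_1}) \to \Hom_R(T,F^{n_0}) \to \Hom_R(T,M\otimes_R F) \to 0$ is still exact.

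With that in hand, I would form the natural commutative diagram
\[
\begin{CD}
R^{n_1}\otimes_R \Hom_R(T,F) @>>> R^{n_0}\otimes_R \Hom_R(T,F) @>>> M\otimes_R \Hom_R(T,F) @>>> 0 \\
@VV{\psi_{R^{n_1}}}V @VV{\psi_{R^{n_0}}}V @VV{\psi_M}V \\
\Hom_R(T,F^{n_1}) @>>> \Hom_R(T,F^{n_0}) @>>> \Hom_R(T,M\otimes_R F) @>>> 0
\end{CD}
\]
The top row is exact by right-exactness of $-\otimes_R \Hom_R(T,F)$, and the bottom row is exact by the previous paragraph. Since $\psi_{R^{n_0}}$ and $\psi_{R^{n_1}}$ are isomorphisms, the five-lemma yields that $\psi_M$ is an isomorphism.

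The only real obstacle is controlling the right-exactness of $\Hom_R(T,-)$ applied to the tensored resolution; a single finite presentation of $M$ is not enough, because right-exactness of the bottom row then requires the vanishing of an $\Ext^1$ term that is essentially the obstruction one is trying to kill. The trick is to use a full (infinite) free resolution of $M$ together with Lemma \ref{cotorsion2}, exploiting the fact that every syzygy of $M\otimes_R F$ in the tensored resolution is again cotorsion (this is where finite Krull dimension enters, via Jensen--Raynaud--Gruson, through Lemma \ref{cotorsion2}(a)).
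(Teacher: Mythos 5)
Your proof is correct and is essentially the paper's argument: reduce to the case $M$ finitely generated free, resolve $M$ by finitely generated free modules, observe that tensoring the resolution with the flat cotorsion $F$ gives a cotorsion resolution of $M\otimes_R F$, apply Lemma \ref{cotorsion2}(b) to preserve exactness under $\Hom_R(T,-)$, and conclude by the Five Lemma. Your closing remark about why a mere finite presentation would not suffice, and where finite Krull dimension enters via Lemma \ref{cotorsion2}(a), is a nice clarification but the underlying route is the same as the paper's.
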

\begin{proof} We first note that the lemma is clear in the case $M$ is a finitely generated free $R$-module.  Let $\mathbf{G}$ be a free resolution of $M$ consisting of finitely generated free $R$-modules.   As $F$ is flat and cotorsion, $\mathbf{G}\otimes_R F$ is a cotorsion resolution of $M\otimes_R F$.  By Lemma \ref{cotorsion2}(b), we obtain that $\Hom_R(T, \mathbf{G}\otimes_R F)$ is a cotorsion resolution of $\Hom_R(T, M\otimes_R F)$.  Now consider the commutative diagram:
$$
\begin{CD}
G_1\otimes_R \Hom_R(T,F)@>>> G_0\otimes_R \Hom_R(T,F) @>>> M\otimes_R \Hom_R(T,F)@>>> 0 \\
@VV\cong V @VV\cong V @VVV @. \\
\Hom_R(T, G_1\otimes_R F)@>>>  \Hom_R(T, G_0\otimes_R F)@>>> \Hom_R(T, M\otimes_R F)@>>>0 \\
\end{CD}
$$
where the rows are exact and the vertical arrows are the natural maps.  Since the first two vertical maps are isomorphisms, so is the third by the Five Lemma.
\end{proof}

We will need the following colocalization result for Tor:

\begin{prop} \label{localTor} Let $R$ be a Noetherian ring of finite Krull dimension.  Let $L$ be a finitely generated $R$-module and $M$ a cotorsion $R$-module.   Suppose $\Tor_i^{R}(L,M)=0$ for all $i>0$. Then $\Tor^{R_p}_i(L_p,\Hom_R(R_p,M))=0$ for all $p\in \Spec R$ and $i>0$.
\end{prop}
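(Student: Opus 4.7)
The plan is to compute both sides of the desired vanishing from a single nicely-behaved resolution of $L$. Since $R$ is Noetherian and $L$ is finitely generated, I will fix a free resolution $\mathbf{G}\to L$ with each $G_i$ a finitely generated free $R$-module. The vanishing hypothesis $\Tor^R_i(L,M)=0$ for $i>0$ says precisely that $\mathbf{G}\otimes_R M$ is a resolution of $L\otimes_R M$.

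The crucial observation is that this resolution is actually a \emph{cotorsion} resolution: each $G_i\otimes_R M$ is a finite direct sum of copies of $M$, and is therefore cotorsion since $M$ is. So Lemma \ref{cotorsion2}(b) applies with the flat $R$-module $T=R_p$, yielding that $\Hom_R(R_p,\mathbf{G}\otimes_R M)$ is a cotorsion resolution of $\Hom_R(R_p, L\otimes_R M)$; in particular, it is exact in positive degrees. Because each $G_i$ is finitely generated and free, there is a natural isomorphism of $R_p$-complexes
$$\Hom_R(R_p,\mathbf{G}\otimes_R M)\;\cong\;\mathbf{G}_p\otimes_{R_p}\Hom_R(R_p,M),$$
since both sides are, in each degree, a finite direct sum of copies of $\Hom_R(R_p,M)$ with differentials inherited coordinate-wise from $\mathbf{G}$. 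Since $\mathbf{G}_p$ is a free resolution of $L_p$ over $R_p$ ($R_p$ being flat over $R$), the vanishing of the positive-degree homology of $\mathbf{G}_p\otimes_{R_p}\Hom_R(R_p,M)$ is exactly the conclusion $\Tor^{R_p}_i(L_p,\Hom_R(R_p,M))=0$ for $i>0$.

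The only nontrivial step is ensuring that $\Hom_R(R_p,-)$ preserves the exactness of $\mathbf{G}\otimes_R M$; everything else is a formal identification enabled by the finitely generated free choice of the $G_i$. This step is precisely what Lemma \ref{cotorsion2} was built to deliver, and it is where the hypothesis that $R$ has finite Krull dimension enters the proof (via the Jensen--Raynaud--Gruson bound $\pd_R T\le\dim R$ used in Lemma \ref{cotorsion2}(a) to propagate cotorsion through the syzygies).
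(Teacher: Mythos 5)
Your proof is correct, and it is a genuinely different (and arguably cleaner) argument than the one in the paper. The paper resolves the \emph{cotorsion} module $M$ by a minimal flat resolution $\mathbf F$, shows that $L\otimes_R\mathbf F$ is a cotorsion resolution of $L\otimes_R M$ (which requires a subsidiary application of Lemma~\ref{cotorsion2}(a) via a free resolution of $L$ to see that each $L\otimes_R F_i$ is cotorsion), applies Lemma~\ref{cotorsion2}(b), and then needs the nontrivial Lemma~\ref{iso} to commute $L\otimes_R-$ past $\Hom_R(R_p,-)$ when the argument is a flat cotorsion module. You instead resolve $L$ by a finite-rank free resolution $\mathbf G$, observe immediately that $\mathbf G\otimes_R M$ is a cotorsion resolution of $L\otimes_R M$ (each term being a finite direct sum of copies of the cotorsion module $M$), apply Lemma~\ref{cotorsion2}(b), and then commute $\Hom_R(R_p,-)$ past the tensor with a finitely generated free module, which is an elementary fact and does not require Lemma~\ref{iso}. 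Both approaches use Lemma~\ref{cotorsion2}(b) as the engine (and hence both invoke the finite Krull dimension hypothesis through the Jensen--Raynaud--Gruson bound), but yours sidesteps Lemma~\ref{iso} entirely and dispenses with the intermediate cotorsion verification, so it is shorter and more self-contained; the paper's route has the advantage of exhibiting explicitly that $\Hom_R(R_p,\mathbf F)$ is an $R_p$-flat resolution of $\Hom_R(R_p,M)$, a fact it reuses elsewhere.
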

\begin{proof} Let $p\in \Spec R$.  It suffices to prove that $\Tor_i^R(L, \Hom_R(R_p,M))=0$ for all $i>0$.  Let $\mathbf{F}$ be a minimal flat resolution of $M$.  As $M$ is cotorsion, so is $F_i$ for all $i$.  As $\Tor_i^R(L,M)=0$ for all $i>0$ we have that $L\otimes_R \mathbf{F}$ is acyclic.   We remark that $L\otimes_R F_i$ is cotorsion for all $i$.  To see this, let $\mathbf{G}$ be a free resolution of $L$ by finitely generated free $R$-modules.  Then  $\mathbf{G}\otimes_R {F_i}$ is a cotorsion resolution of $L\otimes_R F_i$.  Hence, $L\otimes_R F_i$ is cotorsion by Lemma \ref{cotorsion2}(a).  By Lemma \ref{cotorsion2}(b), $\Hom_R(R_p, L\otimes_R \mathbf{F})$
is acyclic.  By Lemma \ref{iso}, $L\otimes_R \Hom_R(R_p, \mathbf{F})\cong \Hom_R(R_p, L\otimes_R \mathbf{F})$ as complexes.
Thus, $L\otimes_R\Hom_R(R_p, \mathbf F)$ is acyclic.  However, $\Hom_R(R_p, \mathbf F)$ is a flat resolution of $\Hom_R(R_p, M)$
by Lemmas \ref{cotorsion1}(a) and \ref{cotorsion2}(b).   Thus, $\Tor_i^R(L,\Hom_R(R_p, M))\cong H_i(L\otimes_R \Hom_R(R_p, \mathbf{F}))=0$ for $i>0$.
\end{proof}

\begin{section}{A vanishing result for Bass numbers and Enochs-Xu numbers}
\end{section}

Recall that for an $R$-module $M$ and $p\in \Spec R$, the Bass number  $\mu_i(p,R)$ is defined to be $\dim_{k(p)}\Ext^i_{R_p}(k(p), M_p)$, where $k(p)$ is the field $R_p/pR_p$.  If $\mathbf{I}$ is a minimal injective resolution of $M$, then $\mu_i(p,M)$ is the cardinality of the number of copies of $E_R(R/p)$ in any decomposition of $I^i$ into indecomposable modules.  It is well-known that if $M$ is finitely generated and $\id_R M<\infty$ then $\mu_i(p,M)=0$ for all $p\in \Spec R$ and  $i>\depth R_p$.  The following proposition shows this result still holds even if $M$ is not finitely generated.
\medskip

\begin{prop} \label{bass} Let $R$ be a Noetherian ring and $M$ an $R$-module such that $\id_RM<\infty$.
Then $\mu_i(p,M)=0$ for all $p\in \Spec R$ and $i>\depth R_p$.
\end{prop}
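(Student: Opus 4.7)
The plan is to localize at $\p$ and then translate Bass numbers, via Matlis duality, into Enochs-Xu numbers of a cotorsion module of finite flat dimension; this converts the question about $\id$ into one about $\fd$, to which the Enochs-Xu machinery of Section 2 applies.

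First, since the localization of an injective $R$-module is injective over $R_\p$, a minimal injective resolution of $M$ localizes to one of $M_\p$, so $\id_{R_\p} M_\p<\infty$ and $\mu_i(\p,M)=\mu_i(\p R_\p,M_\p)$.  Thus I may assume $(R,\m,k)$ is local with $\p=\m$, and the goal is to show $\mu_i(\m,M)=0$ for $i>d:=\depth R$.  Next, set $E:=E_R(k)$ and $N:=\Hom_R(M,E)$.  Applying $\Hom_R(-,E)$ to a minimal injective resolution of $M$ yields a flat resolution of $N$ of length $\id_R M$, using the standard fact that the Matlis dual of an injective $R$-module over a Noetherian local ring is flat; hence $\fd_R N<\infty$.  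Moreover $N$ is cotorsion, since $\Ext^1_R(F,N)\cong\Hom_R(\Tor_1^R(F,M),E)=0$ for any flat $F$.  Using a finitely generated free resolution of $k$ together with the exactness of $\Hom_R(-,E)$, one obtains the natural isomorphism $\Tor_i^R(k,N)\cong\Hom_R(\Ext^i_R(k,M),E)$; since $\Ext^i_R(k,M)$ is a $k$-vector space of dimension $\mu_i(\m,M)$, this gives $\dim_k\Tor_i^R(k,N)=\mu_i(\m,M)$.  By Theorem~\ref{EX}(b) applied to the cotorsion module $N$ (using $\Hom_R(R_\m,N)=N$ for local $R$), $\dim_k\Tor_i^R(k,N)=\pi_i(\m,N)$; so $\mu_i(\m,M)=\pi_i(\m,N)$.

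The problem thereby reduces to showing $\pi_i(\m,N)=0$ for $i>d$ whenever $N$ is cotorsion with finite flat dimension.  This is the main obstacle; I would establish it by analyzing the minimal flat resolution of $N$ and showing that the $\widehat{R}$-summand vanishes in degrees exceeding $d$.  This is the flat-dimension analogue of the Bass number vanishing we set out to prove, and aligns with the paper's introduction claim that depths of modules appearing in degrees exceeding $\depth R$ in a finite minimal flat resolution are infinite; it can equivalently be viewed as an Auslander-Buchsbaum-type bound $\fd_R N\le\depth R$ for cotorsion modules of finite flat dimension.
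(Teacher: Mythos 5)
Your Matlis-duality translation is carried out correctly, and it is in fact exactly the translation the paper itself uses --- but in the \emph{opposite} direction: the paper first proves Proposition~\ref{bass} by a direct induction on Ext modules, and then \emph{deduces} the Tor-vanishing $\Tor^R_i(k,N)=0$ for $i>\depth R$ (Corollary~\ref{dual}) and the Enochs--Xu vanishing $\pi_i(\m,N)=0$ for $i>\depth R$ (Proposition~\ref{EXnumbers}) as consequences. Your proposal reduces Proposition~\ref{bass} to precisely those later statements, so the reduction is circular: you have converted the claim $\mu_i(\m,M)=0$ for $i>\depth R$ into its Matlis-dual form $\Tor^R_i(k,N)=0$ for $i>\depth R$, which is exactly as hard to prove, and then stopped. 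The sentence ``I would establish it by analyzing the minimal flat resolution of $N$ and showing that the $\widehat{R}$-summand vanishes in degrees exceeding $d$'' is not an argument; by Theorem~\ref{EX}(b) that vanishing \emph{is} the assertion $\pi_i(\m,N)=0$, so this merely restates the goal.

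Two further issues. First, your closing claim that the needed vanishing ``can equivalently be viewed as an Auslander--Buchsbaum-type bound $\fd_R N\le\depth R$'' is not an equivalence: $\pi_i(\m,N)=0$ for $i>\depth R$ says nothing about $\pi_i(\p,N)$ at nonmaximal $\p$, and the stronger bound $\fd_R N\le\depth R$ is in general false even for cotorsion modules of finite flat dimension (the paper explicitly warns that the finitistic flat dimension may exceed $\depth R$, citing Bass). Second, and more centrally, the substance of the paper's proof is missing: the actual argument is a downward induction on $i$ in which one chooses a regular sequence $\underline x$ of length $s=\depth R$, uses that $\depth R/(\underline x)=0$ to produce an exact sequence $0\to k\to R/I\to R/J\to 0$, transfers the $\Ext$-vanishing from $k$ to $R/J$ via $\Ext^i_R(k,M)\cong\Ext^{i+1}_R(R/J,M)$ for $i>s$, and then applies $\Hom_R(R/J,-)$ to the minimal injective resolution to obtain a complex whose terms in degrees $\ge r+1$ are $\Tor$-acyclic against $k$, forcing $k\otimes_R\Ext^r_R(k,M)=0$ and hence $\mu_r(M)=0$. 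None of that reasoning --- the choice of $R/J$, the depth-zero trick, the $\Tor$-acyclicity of the shifted Bass modules --- appears in your proposal. Without it, or an independent argument of comparable substance, the proof is incomplete.
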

\begin{proof} By localizing at $p$, we can assume $R$ is local with maximal ideal $m$ and that $p=m$. Let $\mu_i(M):=\mu_i(m,M)$,  $s:=\depth R$, and $k:=R/m$.  Certainly $\mu_i(M)=0$ for $i>\id_RM$.  Suppose $r>s$ and  $\mu_i(M)=0$ for all $i>r$.  It suffices to show that $\mu_r(M)=0$.  Let $x_1,\dots,x_s\in m$ be a regular sequence and $I=(x_1,\dots,x_s)$.   Since $\depth R/I=0$ we have an exact sequence
$$0\to k\to R/I\to R/J\to 0$$ where $J=(I,y)$ for some $y\in m$.   Since $\pd_R R/I=s$, $\Ext^i_R(R/I,M)=0$ for $i>s$.   From the long exact sequence on Ext, we have $\Ext^i_R(k,M)\cong \Ext^{i+1}_R(R/J,M)$ for $i>s$.  As $r>s$ and $\mu_i(M)=0$ for all $i>r$,  we obtain $\Ext^i_R(R/J,M)=0$ for $i>r+1$.  Let $n:=\id_R M$, $\mathbf I$ a minimal injective resolution of $M$ and $\mathbf L:=\Hom_R(R/J, \mathbf I)$.  Then $\operatorname{H}^i(\mathbf L)=\Ext^i_R(R/J,M)=0$ for $i>r+1$, giving us an exact sequence
$$
\begin{CD}
0@>>> K @>>>L^{r+1}@> \partial^{r+1} >> L^{r+2} @>>> \cdots @>>> L^n\to 0,
\end{CD}
$$
where $K=\ker \partial^{r+1}$. Now, for each $i\ge 0$ we have
\begin{align*}
L^i&=\Hom_R(R/J,I^i) \\
&\cong \Hom_R(R/J, \oplus  E_R(R/p)^{\mu_i(p,M)}) \\
&\cong \bigoplus_{p\in V(J)} E_{R/J}(R/p)^{\mu_i(p,M)}.
\end{align*}
Note that for $p\neq m$, $\Tor^R_j(k, E_{R/J}(R/p))=0$ for all $j$ since $E_{R/J}(R/p)$ is an $R_p$-module. Since $\mu_i(M)=0$ for $i\ge r+1$, we see that $\Tor^R_j(k, L_i)=0$ for all $j$ and all $i\ge r+1$.
Hence, from the long exact sequence above, we see that $\Tor_j^R(k,K)=0$ for all $j$.  Now, we have an exact sequence
$$K\to \Ext^{r+1}_R(R/J,M)\to 0.$$
Tensoring this sequence with $k$ and noting that $\Ext^{r+1}_R(R/J,M)\cong \Ext^r_R(k,M)$, we conclude that $k\otimes_R \Ext^r_R(k,M)=0$.  Hence, $\mu_r(M)=0$.

\end{proof}

\begin{cor} \label{dual} Let $R$ be a Noetherian ring and $M$ an $R$-module such that $\fd_RM<\infty$.  
Then $\Tor_i^{R_p}(k(p), M_p)=0$ for all $p\in \Spec R$ and $i>\depth R_p$.
\end{cor}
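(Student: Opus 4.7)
The plan is to deduce this corollary from Proposition~\ref{bass} by passing to the Pontryagin (character) dual $N := \Hom_{\mathbb{Z}}(M, \mathbb{Q}/\mathbb{Z})$, which converts the hypothesis $\fd_R M < \infty$ into the injective-dimension hypothesis of Proposition~\ref{bass} and converts the desired Tor vanishing into exactly the Ext vanishing that proposition supplies. First I would reduce to the local case: since flat dimension does not increase under localization, $\fd_{R_p} M_p \le \fd_R M < \infty$ for every $p \in \Spec R$, so it suffices to show that whenever $(R, m, k)$ is local with $\fd_R M < \infty$, one has $\Tor_i^R(k, M) = 0$ for all $i > \depth R$.

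Next, I would check that $\id_R N < \infty$. Because $\mathbb{Q}/\mathbb{Z}$ is $\mathbb{Z}$-injective, the functor $\Hom_{\mathbb{Z}}(-,\mathbb{Q}/\mathbb{Z})$ is exact, and via the Hom--tensor adjunction
$$\Hom_R\bigl(-,\Hom_{\mathbb{Z}}(F,\mathbb{Q}/\mathbb{Z})\bigr) \cong \Hom_{\mathbb{Z}}(- \otimes_R F, \mathbb{Q}/\mathbb{Z})$$
it carries flat $R$-modules $F$ to injective $R$-modules. Applying it to a finite flat resolution of $M$ therefore yields a finite injective resolution of $N$, so $\id_R N \le \fd_R M < \infty$. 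Proposition~\ref{bass} now gives $\mu_i(m,N) = 0$, equivalently $\Ext^i_R(k,N) = 0$, for every $i > \depth R$.

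The last step translates this back to Tor. The derived form of the adjunction above, again using $\mathbb{Z}$-injectivity of $\mathbb{Q}/\mathbb{Z}$, produces a natural isomorphism
$$\Ext^i_R(k, N) \cong \Hom_{\mathbb{Z}}\bigl(\Tor_i^R(k, M), \mathbb{Q}/\mathbb{Z}\bigr)$$
for all $i \ge 0$. The left side vanishes for $i > \depth R$, and since $\mathbb{Q}/\mathbb{Z}$ is a faithful cogenerator for abelian groups this forces $\Tor_i^R(k, M) = 0$, as required. The argument is short and uses only standard character-module duality together with the preservation of finite flat dimension under localization; I do not anticipate any serious obstacle beyond the work already carried out in Proposition~\ref{bass}.
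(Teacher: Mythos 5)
Your proof is correct and follows essentially the same approach as the paper: reduce to the local case, dualize to turn $\fd_R M<\infty$ into finite injective dimension of the dual, invoke Proposition~\ref{bass}, and translate the resulting Ext-vanishing back to Tor-vanishing via the derived adjunction. The only difference is cosmetic --- the paper uses the Matlis dual $\Hom_R(-,E_R(k))$ where you use the Pontryagin dual $\Hom_{\mathbb{Z}}(-,\mathbb{Q}/\mathbb{Z})$; both are exact, carry flat $R$-modules to injective $R$-modules, and are faithful, so either serves the purpose.
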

\begin{proof}  By localizing at $p$, we may assume $R$ is local and $p=m$.  Let $k=R/m$, $E=E_R(k)$,
and let $(-)^{\text{v}}=\Hom_R(-,E)$ be  the Matlis dual functor.  Since the Matlis dual is exact and $T^{\text{v}}$ is injective for any flat $R$-module $T$, we see that $\id_R M^{\text{v}}<\infty$.  By Proposition \ref{bass} and \cite[Corollary 11.54]{R}, we
have $\Tor^R_i(k,M)^{\text{v}}\cong \Ext^i_R(k,M^{\text{v}})=0$ for $i>\depth R$.  Hence, $\Tor_i^R(k,M)=0$ for $i>\depth R$. 

\end{proof}

We now prove the analogue of Proposition \ref{bass} for the Enochs-Xu numbers of a module of finite flat dimension:

\begin{prop} \label{EXnumbers} Let $R$ be a Noetherian ring and $M$ an $R$-module such that $\fd_R M<\infty$.
Then $\pi_i(p,M)=0$ for all $p\in \Spec R$ and $i>\depth R_p$.
\end{prop}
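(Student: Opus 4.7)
The plan is to reduce to the cotorsion case, apply the Enochs--Xu formula to rewrite $\pi_i(p,M)$ as a Tor over $R_p$, and then invoke Corollary \ref{dual} to get vanishing above the depth. By Theorem \ref{EX}(a), $\pi_i(p,M) = \pi_i(p,C_R(M))$ for all $i$ and $p$, and since $\fd_R M = \fd_R C_R(M)$, it suffices to prove the statement under the additional assumption that $M$ is cotorsion. In that case Theorem \ref{EX}(b) gives
$$\pi_i(p,M) \;=\; \dim_{k(p)} \Tor_i^{R_p}\!\bigl(k(p),\, \Hom_R(R_p, M)\bigr),$$
so the task reduces to showing that $N := \Hom_R(R_p, M)$, viewed as an $R_p$-module, has finite flat dimension. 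Once this is established, Corollary \ref{dual} applied over the local ring $R_p$ (with maximal ideal $pR_p$) immediately yields $\Tor_i^{R_p}(k(p), N) = 0$ for $i > \depth R_p$, hence $\pi_i(p,M) = 0$ in that range.

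To control $\fd_{R_p} N$, let $n = \fd_R M$ and let
$$\mathbf{F}\colon \quad 0 \to F_n \to F_{n-1} \to \cdots \to F_0 \to 0$$
be a minimal flat resolution of $M$. Since $M$ is cotorsion, each $F_i$ is flat \emph{and} cotorsion, so Lemma \ref{cotorsion1}(b) says $\Hom_R(R_p, F_i)$ is flat and cotorsion as an $R_p$-module. Moreover, because $\mathbf{F}$ is a cotorsion resolution of $M$ (all the syzygies appearing in it are cotorsion as well, e.g.\ by Lemma \ref{cotorsion2}(a)), Lemma \ref{cotorsion2}(b) guarantees that $\Hom_R(R_p,\mathbf{F})$ remains exact and so is a flat resolution of $N$ over $R_p$ of length at most $n$. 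Thus $\fd_{R_p} N \le n < \infty$, and the argument concludes as above.

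The only subtle point is arranging that $\Hom_R(R_p,-)$ preserves both flatness and exactness at every stage; this is precisely what Lemmas \ref{cotorsion1}(b) and \ref{cotorsion2}(b) are designed to deliver once we have reduced to a cotorsion module $M$, which is why the opening reduction via the cotorsion envelope is essential.
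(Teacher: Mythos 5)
Your proposal matches the paper's proof in substance: both reduce to the cotorsion case via Theorem \ref{EX}(a), show that $\Hom_R(R_p,M)$ has finite flat dimension over $R_p$ using Lemmas \ref{cotorsion1}(b) and \ref{cotorsion2}(b) applied to a finite minimal flat resolution, and then invoke Theorem \ref{EX}(b) together with Corollary \ref{dual} to conclude. One small caveat worth noting: Lemma \ref{cotorsion2}(a) carries a finite Krull dimension hypothesis that Proposition \ref{EXnumbers} does not, so rather than citing that lemma to see the syzygies of $\mathbf{F}$ are cotorsion, you should observe directly that each syzygy is the kernel of a flat cover and hence cotorsion (as recorded before Theorem \ref{EX} from \cite[Lemma 2.2]{E}), which also makes the exactness of $\Hom_R(R_p,\mathbf{F})$ immediate without appealing to the dimension hypothesis.
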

\begin{proof} By Theorem \ref{EX}(a),  we may assume $M$ is a cotorsion module.  Let $p\in \Spec R$.  Note that $\Hom_R(R_p,M)$ is a cotorsion $R_p$-module by Lemma \ref{cotorsion1}(a) and $\fd_{R_p} \Hom_R(R_p,M)<\infty$ by Lemmas \ref{cotorsion1}(b) and \ref{cotorsion2}(b). Using part (b) of Theorem \ref{EX},  we have that $\pi_i(p,M)=\pi_i(pR_p, \Hom_R(R_p,M))$ for all $i\ge 0$.  
Resetting notation, we can now assume $R$ is a Noetherian local ring and $p=m$, the maximal ideal of $R$.   By Theorem \ref{EX}(b) and Corollary \ref{dual}, we have $\pi_i(m,M)=\dim_{R/m}\Tor^R_i(R/m, M)=0$ for $i>\depth R$.  
\end{proof}

\begin{section}{An acyclicity theorem for finite flat resolutions}
\end{section}

We begin by making a convention regarding depth for (possibly) non-finitely generated modules.  Let $R$ be a Noetherian local ring with maximal ideal $m$ and $M$ an arbitrary $R$-module.  For $i\ge 0$, let  $H^i_m(M)$ denote the $i$th local cohomology module with support in $m$.  We define the {\it depth} of $M$ by
$$\depth M:=\inf \{i\in \mathbb N_0\mid H^i_m(M)\neq 0\}.$$
Note that under this definition,  $\depth M=\infty$ if and only if $H^i_m(M)=0$ for all $i$.
We remark that if $F$ is a flat $R$-module then $H^i_m(F)\cong H^i_m(R)\otimes_R F$; hence, $\depth F\ge \depth R$.  We'll need the following result concerning the depths of certain cotorsion flat modules:
\medskip

\begin{lemma} \label{lem} Let $\phi:R\to S$ be a homomorphism of Noetherian local rings.  Let $m$ and $n$ be the maximal ideals of $R$ and $S$, respectively, and assume $\phi(m)\subseteq n$.   Let $F$ be a cotorsion flat $R$-module such that $\pi(m,F)=0$.  Then for every $S$-module $N$ we have $\depth_S N\otimes_R F=\infty$. In particular, $\depth F=\infty$.
\end{lemma}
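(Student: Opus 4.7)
The strategy is to first interpret $\pi(\m,F)=0$ as an $\m$-divisibility condition on $F$, and then to compute $H^i_\n(N\otimes_R F)$ with a \v Cech complex, exploiting the $R$-flatness of $F$ to commute tensor with cohomology and reduce to a trivial vanishing.

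First I would unpack the hypothesis using Theorem \ref{EX}(b) applied to the cotorsion flat module $F$ at the maximal ideal $\p=\m$. Since $R_\m=R$, $\Hom_R(R,F)=F$, and $F$ coincides with the unique term of its own (length-zero) minimal flat resolution, we get
$$\pi(\m,F)=\pi_0(\m,F)=\dim_k\Tor_0^R(k,F)=\dim_k F/\m F,$$
where $k=R/\m$. Hence $\pi(\m,F)=0$ is equivalent to $F=\m F$, which iterates to $F=\m^t F$ for every $t\ge 1$. This is the only step that invokes the Enochs-Xu structure theory, and it is what makes the hypothesis usable.

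Next I would reduce the left-hand local cohomology to one on $N$ alone. Choose generators $y_1,\ldots,y_s$ of $\n$ and let $\check C^\bullet$ denote the associated \v Cech complex of flat $S$-modules, so that $H^i_\n(-)=H^i(\check C^\bullet\otimes_S-)$ on $S$-modules. A reassociation of tensor products yields
$$\check C^\bullet\otimes_S(N\otimes_R F)\;\cong\;(\check C^\bullet\otimes_S N)\otimes_R F,$$
and flatness of $F$ over $R$ lets me move $-\otimes_R F$ past cohomology, producing the key identification
$$H^i_\n(N\otimes_R F)\;\cong\;H^i_\n(N)\otimes_R F.$$

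Finally, the vanishing: $H^i_\n(N)$ is $\n$-torsion as an $S$-module and therefore $\m$-torsion as an $R$-module, since $\phi(\m)\subseteq\n$. Writing $H^i_\n(N)=\directlimit{t}{L_t}$ with $L_t:=(0:_{H^i_\n(N)}\m^t)$, each $L_t$ is an $R/\m^t$-module, so
$$L_t\otimes_R F\;\cong\;L_t\otimes_{R/\m^t}(F/\m^t F)\;=\;0$$
using $F=\m^t F$ from the first step. Passing to the limit gives $H^i_\n(N)\otimes_R F=0$, and hence $H^i_\n(N\otimes_R F)=0$ for every $i$, which is exactly the assertion $\depth_S(N\otimes_R F)=\infty$. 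The ``in particular'' claim is the special case $S=R$, $\phi=\id$, $N=R$. I do not anticipate a substantial obstacle beyond recognizing that the Enochs-Xu invariant at $\m$ computes $F/\m F$; after that point, the argument is just a short \v Cech manipulation and a Nakayama-style collapse.
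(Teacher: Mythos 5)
Your proof is correct, and it takes a genuinely different route to the key vanishing than the paper does. Both arguments begin the same way: flatness of $F$ gives $H^i_\n(N\otimes_R F)\cong H^i_\n(N)\otimes_R F$, and the condition $\phi(\m)\subseteq \n$ makes $H^i_\n(N)$ an $\m$-torsion $R$-module, so everything reduces to showing $M\otimes_R F=0$ whenever $\Supp_R M\subseteq\{\m\}$. Where the arguments diverge is in how they use the hypothesis $\pi(\m,F)=0$. The paper's proof reaches directly for Enochs' structure theorem, writing $F\cong\prod_{\p\neq\m}T(\p)$, and then uses that tensoring a finitely presented module commutes with products to get $M\otimes_R F\cong\prod_{\p\neq\m}(M\otimes_R T(\p))=0$ for finite-length $M$, followed by a direct-limit step. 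You instead translate $\pi(\m,F)=0$ into the intrinsic statement $F/\m F=0$ via Theorem~\ref{EX}(b) (which applies since $F$ is cotorsion and is its own minimal flat resolution), iterate to $F=\m^tF$, and kill each $(0:_{H^i_\n(N)}\m^t)$ by base change to $R/\m^t$. This is a cleaner reduction: it avoids the explicit product decomposition and the need to know that tensor against a finitely presented module commutes with infinite products, replacing both with a single Nakayama-style observation. The one place where you still lean on the Enochs--Xu machinery is the identification $\pi(\m,F)=\pi_0(\m,F)=\dim_k F/\m F$, but that is a legitimate and economical use of Theorem~\ref{EX}(b), so the argument is sound as written.
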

\begin{proof} We first remark that as $F$ is a flat $R$-module, $H^i_n(N\otimes_R F)\cong H^i_n(N)\otimes_R F$ for all $i$. Note that, as $mS\subseteq n$, $\Supp_R H^i_n(N)\subseteq \{m\}$ for all $i$.  Thus, it suffices to prove that given any $R$-module $M$ with $\Supp_R M\subseteq \{m\}$ then $M\otimes_R F=0$.   As $F$ is flat and cotorsion, we have $F\cong \prod_{p\in \Spec R}T(p)$ where $T(p)$ is the $pR_p$-adic completion of a free $R_p$-module of rank $\pi(p, F)$.  
As $\pi(m,F)=0$, we can write this decomposition as $F\cong \prod_{p\neq m} T(p)$.   Let $M$ be an $R$-module of finite length.   As $R$ is Noetherian, $M$ is finitely presented and hence $M\otimes_R F\cong \prod_{p\neq m} (M\otimes_R T(p))=0,$ since $M\otimes_R R_p=0$ for all $p\neq m$.  Suppose now that $M$ is an arbitrary $R$-module such that $\Supp_R M\subseteq \{m\}$.  Then $M$ is the direct limit of the direct system $\{M_{\alpha}\}$ consisting of the finite length $R$-submodules of $M$ (with morphisms the inclusion maps).  Thus
$$M\otimes_R F\cong (\varinjlim M_{\alpha})\otimes_R F\cong \varinjlim (M_{\alpha}\otimes_R F)=0.$$
\end{proof}

Our next result is the acyclicity lemma of Peskine and Szpiro \cite{PS}, but stated for complexes whose modules are not necessarily finitely generated.  The proof is largely {\it mutatis mutandis}, but we include it for the convenience of the reader.

\begin{prop} \label{ps} (\cite[Lemme d'acyclicit\'e 1.8]{PS}) Let $R$ be a Noetherian local ring and consider a bounded complex $\mathbf T$ of $R$-modules
$$\mathbf T:\qquad 0\to T_s\xrightarrow{f_s} T_{s-1}\to \cdots \xrightarrow{f_0} T_0\to 0.$$
Suppose the following two conditions hold for each $i>0$:
\begin{enumerate}
\item $\depth T_i\ge i$; 
\item $\depth \operatorname{H}_i(\mathbf T)=0$ or $\operatorname{H}_i(\mathbf T)=0$.
\end{enumerate}
Then $\operatorname{H}_i(\mathbf T)=0$ for all $i>0$.
\end{prop}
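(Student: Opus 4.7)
I would adapt the classical argument of Peskine and Szpiro, proceeding by contradiction. The only place the finitely generated version uses finite generation is in the \emph{depth lemma}: for a short exact sequence $0\to A\to B\to C\to 0$, $\depth A\ge \min(\depth B,\depth C+1)$ and $\depth C\ge \min(\depth B,\depth A-1)$. Under the convention $\depth M=\inf\{i\mid H^i_\m(M)\ne 0\}$ adopted just before the statement, both inequalities are immediate from the long exact sequence of local cohomology and require no finiteness assumption on the modules. This is the main (and essentially only) obstacle: once the depth lemma is available in this generality, the structural part of the classical argument goes through verbatim.

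Assuming for contradiction that $\operatorname{H}_i(\mathbf T)\ne 0$ for some $i>0$, I would let $j>0$ be the largest such index and set $K_k=\ker f_k$, $B_k=\im f_{k+1}$, so that $\operatorname{H}_k(\mathbf T)=K_k/B_k$ and by hypothesis $\depth \operatorname{H}_j(\mathbf T)=0$. Since $\operatorname{H}_k(\mathbf T)=0$ for every $k>j$, the truncated sequence
$$0\to T_s\to T_{s-1}\to\cdots\to T_{j+1}\to B_j\to 0$$
(with the final arrow the surjection onto $B_j$ induced by $f_{j+1}$) is exact. I would split it into the short exact sequences $0\to B_k\to T_k\to B_{k-1}\to 0$ for $j+1\le k\le s$ (where $B_s=0$, so the top piece reduces to $B_{s-1}\cong T_s$) and run a descending induction on $k$ using the depth lemma together with $\depth T_k\ge k$, obtaining $\depth B_k\ge k+1$ for every $j\le k\le s-1$.

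The contradiction then falls out of the two short exact sequences attached to $K_j$. Applying the depth lemma to $0\to B_j\to K_j\to \operatorname{H}_j(\mathbf T)\to 0$, and using $\depth \operatorname{H}_j(\mathbf T)=0$ together with $\depth B_j-1\ge j\ge 1$, forces $\depth K_j=0$. On the other hand, applying the depth lemma to $0\to K_j\to T_j\to B_{j-1}\to 0$ gives $\depth K_j\ge \min(\depth T_j,\depth B_{j-1}+1)\ge \min(j,1)=1$, the desired contradiction. The boundary case $j=s$ is similar and simpler: there $K_s=\operatorname{H}_s(\mathbf T)$ would have depth $0$, while the single short exact sequence $0\to K_s\to T_s\to B_{s-1}\to 0$ forces $\depth K_s\ge 1$.
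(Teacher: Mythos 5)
Your argument is correct and is essentially the same as the paper's: both decompose the truncated complex into the short exact sequences $0\to B_k\to T_k\to B_{k-1}\to 0$, run the same descending induction to get $\depth B_j\ge j+1$, and derive the contradiction from $0\to B_j\to K_j\to \operatorname{H}_j(\mathbf T)\to 0$ together with $K_j\subseteq T_j$. The only cosmetic differences are that you phrase it as a contradiction at the largest nonzero index while the paper phrases it as descending induction, and that you explicitly note that the depth lemma holds for arbitrary modules under the local-cohomology definition of depth, a point the paper uses implicitly.
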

\begin{proof}  If $s=0$ there is nothing to show, so we assume $s>0$.  Since $\operatorname{H}_s(\mathbf T)$ is a submodule of $T_s$ and $\depth T_s\ge s\ge 1$, we see that $\depth \operatorname{H}_s(\mathbf T)>0$.  Thus, $\operatorname{H}_s(\mathbf T)=0$.
Suppose for some $j\ge 1$ we have that $\operatorname{H}_i(\mathbf T)=0$ for $j< i\le s$.  It suffices to show that $\operatorname{H}_j(\mathbf T)=0$.  Let $C_j=\im f_{j+1}$.  Then the truncated complex
$$0\to T_s\xrightarrow{f_s} T_{s-1}\to \cdots \xrightarrow{f_{j+2}} T_{j+1}\to C_j\to 0$$
is exact.  Using the inequalities $\depth T_i\ge i$ and induction, one can show that $\depth C_j\ge j+1$.  Let $K_j=\ker f_j$ and consider the exact sequence  $$0\to C_j\to K_j\to \operatorname{H}_j(\mathbf T)\to 0.$$   Since $K_j\subseteq T_j$ and $\depth T_j\ge j\ge 1$,
we conclude that $\depth K_j\ge 1$.   Since $\depth C_j\ge 2$, we see that $\depth \operatorname{H}_j(\mathbf T)\ge 1$.  Thus, $\operatorname{H}_j(\mathbf T)=0$.
\end{proof}

We now prove the main result of this section, which generalizes \cite[Corollary 1.10]{PS} to modules of finite flat dimension:

\begin{thm}\label{main} Let $\phi:R\to S$ be a homomorphism of Noetherian rings such that  for every $q\in \Spec S$ and $p=\phi^{-1}(q)$ one has that $\depth S_q\ge \depth R_p$.   Let $M$ be an $R$-module of finite flat dimension.  Then 
\begin{enumerate} 
\item[(a)] $\Tor_i^R(S,M)=0$ for all $i>0$.
\item[(b)] $\fd_S S\otimes_R M\le \fd_R M$.
\item[(c)] If $k(p)\otimes_R S\neq 0$ for all $p\in \Spec R$, then $\fd_S S\otimes_R M = \fd_R M$.
\end{enumerate}
\end{thm}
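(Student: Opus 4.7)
The plan is to establish (a) by Noetherian induction on $\Spec S$, then deduce (b) and (c). Fix $q \in \Spec S$, let $p = \phi^{-1}(q)$, and note that flat base change along $R \to R_p$ yields $\Tor_i^R(S_q, M) \cong \Tor_i^{R_p}(S_q, M_p)$, since $S_q$ is already an $R_p$-module; so it suffices to work with the local homomorphism $R_p \to S_q$. Let $\mathbf F$ be a minimal flat resolution of $M_p$ over $R_p$. I will apply the acyclicity lemma (Proposition \ref{ps}) to the complex $\mathbf T := S_q \otimes_{R_p} \mathbf F$ of $S_q$-flat modules, using the inductive hypothesis that $\Tor_i^R(S_{q'}, M) = 0$ for all $q' \subsetneq q$ and $i > 0$.

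For condition (1) of Proposition \ref{ps}, $\depth_{S_q}(S_q \otimes_{R_p} F_i) \geq i$ for $i \geq 1$: when $i \leq \depth R_p$, $S_q \otimes_{R_p} F_i$ is $S_q$-flat and $\depth S_q \geq \depth R_p \geq i$, giving depth at least $i$; when $i > \depth R_p$, Proposition \ref{EXnumbers} yields $\pi_i(pR_p, M_p) = 0$, so Lemma \ref{lem} applied to the local homomorphism $R_p \to S_q$ with $N = S_q$ and $F = F_i$ (which is cotorsion for $i \geq 1$) gives $\depth_{S_q}(S_q \otimes_{R_p} F_i) = \infty$. For condition (2), $H_i(\mathbf T) \cong \Tor_i^R(S_q, M)$ localizes (via the flat map $S_q \to S_{q'}$) at each $q' S_q \subsetneq qS_q$ to $\Tor_i^R(S_{q'}, M) = 0$ by the inductive hypothesis, so $\Supp_{S_q} H_i(\mathbf T) \subseteq \{qS_q\}$; consequently the annihilator of every element of $H_i(\mathbf T)$ in $S_q$ is $qS_q$-primary, forcing $\depth_{S_q} H_i(\mathbf T) = 0$ whenever $H_i(\mathbf T) \neq 0$. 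Proposition \ref{ps} then yields $H_i(\mathbf T) = 0$ for all $i > 0$, completing the induction and proving (a).

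Part (b) is immediate: by (a), $S \otimes_R \mathbf F$ is a flat resolution of $S \otimes_R M$ over $S$ of length $\leq \fd_R M$. For (c), the Tor vanishing from (a) collapses the change-of-rings spectral sequence to an isomorphism $\Tor_i^R(N, M) \cong \Tor_i^S(N, S \otimes_R M)$ for every $S$-module $N$; hence $\fd_S(S \otimes_R M)$ equals $\sup\{i : \Tor_i^R(N, M) \neq 0 \text{ for some $S$-module } N\}$, and one needs to realize this supremum as at least $n = \fd_R M$. Enochs-Xu theory (Theorem \ref{EX}) locates a prime $p^* \in \Spec R$ with top Enochs-Xu number $\pi_n(p^*, M) > 0$, and the fiber hypothesis furnishes $q^* \in \Spec S$ with $\phi^{-1}(q^*) = p^*$; combining Theorem \ref{EX}(b) (after replacing $M$ by its cotorsion envelope, which preserves flat dimension), flat base change along $R \to R_{p^*}$, and the $k(p^*)$-flatness of $k(q^*)$ via the identity $\Tor_n^R(k(q^*), M) \cong k(q^*) \otimes_{k(p^*)} \Tor_n^{R_{p^*}}(k(p^*), M_{p^*})$ produces the required nonvanishing $\Tor_n^S(k(q^*), S \otimes_R M) \neq 0$. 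The principal obstacle is condition (1) in the acyclicity argument when $i > \depth R_p$: no depth bound on $S_q \otimes_{R_p} F_i$ comes from $S_q$-flatness alone, so one must leverage the vanishing of the Enochs-Xu number at $pR_p$ (Proposition \ref{EXnumbers}) to activate Lemma \ref{lem} and obtain infinite depth.
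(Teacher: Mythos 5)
Your argument for part (a) is essentially the paper's, merely rephrased: the paper picks a prime $q$ minimal in $\bigcup_{i\ge 1}\Supp_S \Tor_i^R(S,M)$ and argues by contradiction, while you run a Noetherian induction on $\Spec S$; both reductions land on the same local situation with $\Supp_{S_q}\operatorname{H}_i(\mathbf T)\subseteq\{qS_q\}$, and both then verify the two hypotheses of the acyclicity lemma in the same way, using flatness for $i\le\depth R_p$ and Proposition~\ref{EXnumbers} together with Lemma~\ref{lem} for $i>\depth R_p$. Part (b) is identical to the paper.

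Part (c) has a genuine gap. You want to produce an $S$-module $N$ with $\Tor_n^R(N,M)\neq 0$, and you propose $N=k(q^*)$ together with the base-change identity $\Tor_n^R(k(q^*),M)\cong k(q^*)\otimes_{k(p^*)}\Tor_n^{R_{p^*}}\bigl(k(p^*),M_{p^*}\bigr)$. This reduces you to showing $\Tor_n^{R_{p^*}}\bigl(k(p^*),M_{p^*}\bigr)\neq 0$. However, the Enochs-Xu number $\pi_n(p^*,M)$ is, by Theorem~\ref{EX}(b), $\dim_{k(p^*)}\Tor_n^{R_{p^*}}\bigl(k(p^*),\Hom_R(R_{p^*},C_R(M))\bigr)$, which is a statement about the \emph{colocalization} $\Hom_R(R_{p^*},-)$, not the localization $M_{p^*}$. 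These are different modules, and the positivity of the Enochs-Xu number does not transfer across this mismatch; your proposal silently identifies the two. The paper sidesteps this entirely by invoking Auslander--Buchsbaum \cite[Proposition 2.2]{AB}: if $J$ is maximal among ideals with $\Tor_n^R(R/J,M)\neq 0$, then $J=p$ is prime and $\Tor_n^R(R/p,M)$ is already a nonzero $k(p)$-vector space; tensoring the truncated complex with $S$ and reassociating then gives $\Tor_n^S(k(p)\otimes_R S, S\otimes_R M)\cong S\otimes_R k(p)^{\ell}\neq 0$ directly. Your outline can be salvaged by replacing the Enochs-Xu step with the standard fact (also available via \cite{AB}) that $\fd_RM=n$ forces $\Tor_n^{R_{p^*}}\bigl(k(p^*),M_{p^*}\bigr)\neq 0$ for some $p^*$, but as written the appeal to $\pi_n$ does not deliver that.
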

\begin{proof} We prove part (a) by way of contradiction.  Suppose $\Tor_i^R(S,M)\neq 0$ for some $i\ge 1$.  Let $q\in \Spec S$ be a prime minimal with respect to the property that $\Tor_i^R(S,M)_q\neq 0$ for some $i\ge 1$ and let $p:=\phi^{-1}(q)$.   By replacing $R$, $S$, and $M$ with $R_p$, $S_q$, and $M_p$, we may assume $(R,m)$ and $(S,n)$ are local rings,  $\phi(m)\subseteq n$,  $\Tor_i^R(S,M)\neq 0$ for some $i\ge 1$, and  $\Supp_S\Tor_i^R(S,M)\subseteq \{n\}$ for all $i\ge 1$.   Let
$\mathbf{F}$ be a
minimal flat resolution of $M$ and $\mathbf{L}:=S\otimes_R {\mathbf F}$.    Since $\operatorname{H}_i(\mathbf L) \cong \Tor^R_i(S,M)$  and $\Supp_S \Tor^R_i(S,M)\subseteq \{n\}$ for all $i\ge 1$, we have that $\depth \operatorname{H}_i(\mathbf L)=0$ or $\operatorname{H}_i(\mathbf L)=0$ for all $i\ge 1$.  

We claim that $\depth_S L_i\ge i$ for all $i$.  Since $L_i$ is a flat $S$-module, $\depth_S L_i\ge \depth S$.  Suppose $i>\depth S$.   Then $F_i$ is a cotorsion flat $R$-module and $\pi(m,F_i)=\pi_i(m, M)=0$ by Proposition \ref{EXnumbers} since $i>\depth R$. Then $\depth_S L_i=\infty$ by Lemma \ref{lem}.  This proves the claim.  By Proposition \ref{ps}, we obtain that $\operatorname{H}_i(\mathbf L)=\Tor_i^R(S,M)=0$ for all $i\ge 1$, which is a contradiction.  This proves part (a).

To see part (b), let $\mathbf F$ be a flat resolution of $M$ of length $n=\fd_R M$.  Then $S\otimes_R \mathbf F$ is an $S$-flat resolution of $S\otimes_R M$, since $\Tor^R_i(S,M)=0$ for all $i>0$ by part (a).   Hence, $\fd_S S\otimes_R M\le n$.

To prove part (c) it suffices by part (b) to show that $\fd_S S\otimes_R M\ge \fd_R M$.   Let $n=\fd_R M$ and $J$ an ideal maximal with respect to $\Tor_n^R(R/J,M)\neq 0$.   Then by \cite[Proposition 2.2]{AB}, $J=p$ is prime and $\Tor_n^R(R/p, M)$ is a (nonzero) $k(p)$-module, where $k(p)=R_p/pR_p$.   Let $(\mathbf F, \mathbf \partial)$ be a flat resolution of $M$ of length $n$.    Then we have an exact sequence
$$0\to \Tor^R_n(R/p,M)\to k(p)\otimes_R F_n\xrightarrow{1\otimes \partial_n} k(p)\otimes_R F_{n-1}.$$
Now, tensoring with $S$ over $R$ (which is the same as tensoring by $S\otimes_R k(p)$ over $k(p)$, which is flat over $k(p)$), we have
$$0\to S\otimes_R \Tor^R_n(R/p, M)\to S\otimes_R k(p)\otimes_R F_n\xrightarrow{1\otimes 1\otimes \partial_n} S\otimes_R k(p)\otimes_R F_{n-1}$$
is exact.  Reassociating, we obtain an exact sequence
$$0\to S\otimes_R \Tor^R_n(R/p, M)\to (k(p)\otimes_R S)\otimes_S (S\otimes_R F_n)\xrightarrow{(1\otimes 1)\otimes (1\otimes \partial_n)} (k(p)\otimes_R S)\otimes_S (S\otimes_R F_{n-1}).$$  Since $S\otimes_R \mathbf{F}$ is a flat $S$-resolution of $S\otimes_R M$, this sequence shows that 
$$\Tor^S_n(k(p)\otimes_R S, S\otimes_R M)\cong S\otimes_R \Tor^R_n(R/p, M)\cong S\otimes_R k(p)^{\ell},$$
which is nonzero, since $l\neq 0$ and $S\otimes_R k(p)\neq 0$.  Hence, $\fd_S S\otimes_R M\ge n$.
\end{proof}

In the case the ring homomorphism $\phi:R\to S$ is finite, we have the following dual result for modules of finite injective dimension:

\begin{cor}\label{dual2} Let $\phi:R\to S$ be as in in Theorem \ref{main} and assume $S$ is a finitely generated $R$-module.  Let $M$ be an $R$-module of finite injective dimension.  Then 
\begin{enumerate} 
\item[(a)] $\Ext_R^i(S,M)=0$ for all $i>0$.
\item[(b)] $\id_S \Hom_R(S,M)\le \id_R M$.
\item[(b)] If $k(p)\otimes_R S\neq 0$ for all $p\in \Spec R$, then $\id_S \Hom_R(S,M) = \id_R M$.
\end{enumerate}
\end{cor}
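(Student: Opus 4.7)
The plan is to deduce Corollary \ref{dual2} from Theorem \ref{main} via Matlis duality, in close analogy with how Corollary \ref{dual} is deduced from Proposition \ref{bass}. For part (a), I would first reduce to the case where $R$ is local: since $S$ is finitely presented over the Noetherian ring $R$, the standard flat base change identity $\Ext^i_R(S, M)\otimes_R R_p \cong \Ext^i_{R_p}(S_p, M_p)$ reduces the desired vanishing to the case where $(R, m)$ is local, with $S$ semi-local and finite over $R$ and the depth hypothesis descending to the induced map. Let $E = E_R(R/m)$ and $(-)^{\text{v}} = \Hom_R(-, E)$.

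Two facts will drive the argument in the local case: (i) the Matlis dual of an injective $R$-module is flat, since for each prime $p$ the module $E_R(R/p)^{\text{v}}$ is flat and products of flats over a Noetherian ring are flat, so applying $(-)^{\text{v}}$ to a finite $R$-injective resolution of $M$ produces a finite flat resolution of $M^{\text{v}}$, giving $\fd_R M^{\text{v}} \le \id_R M$; and (ii) because $S$ is finitely presented, a resolution $P_\bullet \to S$ by finitely generated free $R$-modules yields, via the natural isomorphisms $P_i\otimes_R M^{\text{v}} \cong \Hom_R(P_i, M)^{\text{v}}$ and exactness of $(-)^{\text{v}}$, canonical isomorphisms $\Ext^i_R(S, M)^{\text{v}} \cong \Tor^R_i(S, M^{\text{v}})$ for all $i\ge 0$. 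Theorem \ref{main}(a) applied to $M^{\text{v}}$ then gives $\Tor^R_i(S, M^{\text{v}}) = 0$ for $i > 0$, and because $(-)^{\text{v}}$ is faithful over the Noetherian local ring $R$, one obtains $\Ext^i_R(S, M) = 0$ for $i > 0$, proving (a). Part (b) will follow immediately: applying the exact functor $\Hom_R(S, -)$ to an $R$-injective resolution of $M$ of length $N := \id_R M$ produces an $S$-injective resolution of $\Hom_R(S, M)$ of length $\le N$, since $\Hom_R(S, I)$ is $S$-injective whenever $I$ is $R$-injective.

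For part (c), by (b) it suffices to establish $\id_S \Hom_R(S, M) \ge N$. By the Bass ascent lemma for Bass numbers, valid for arbitrary modules over Noetherian rings, any prime $p$ with $\mu_N(p, M) > 0$ must be maximal in $R$; otherwise ascending along a saturated chain $p\subsetneq \cdots$ would force some higher Bass number to be nonzero, contradicting $\id_R M = N$. I would choose such a maximal $p$, use the hypothesis $k(p)\otimes_R S \ne 0$ to find $q\in \Spec S$ lying over $p$ (automatically maximal since $\phi$ is finite), and invoke the Grothendieck change-of-rings spectral sequence $\Ext^i_S(S/q, \Ext^j_R(S, M)) \Rightarrow \Ext^{i+j}_R(S/q, M)$, which collapses by (a) to the isomorphism $\Ext^i_S(S/q, \Hom_R(S, M)) \cong \Ext^i_R(S/q, M)$. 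Since $S/q$ is a finite field extension of $R/p$ of some degree $f$, one has $S/q \cong (R/p)^f$ as $R$-modules, and so $\Ext^N_R(S/q, M) \cong \Ext^N_R(R/p, M)^f \ne 0$ because $\mu_N(p, M) > 0$ and $p$ is maximal. Since $q$ is maximal in $S$, this forces $\mu_N(q, \Hom_R(S, M)) > 0$, giving $\id_S \Hom_R(S, M) \ge N$.

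The hard part will be making the Matlis-duality machinery in (a) precise, in particular the isomorphism $\Ext^i_R(S, M)^{\text{v}} \cong \Tor^R_i(S, M^{\text{v}})$ (which rests on the finite presentation of $S$ over the Noetherian ring $R$ together with the identity $P\otimes_R N^{\text{v}} \cong \Hom_R(P, N)^{\text{v}}$ for finitely generated projective $P$) and the bound $\fd_R M^{\text{v}} \le \id_R M$ (which relies on the classical fact that Matlis duals of injectives are flat over Noetherian rings).
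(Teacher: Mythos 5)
Your treatment of parts (a) and (b) matches the paper's: reduce to the local case, use the natural isomorphism $\Ext^i_R(S,M)^{\mathrm v}\cong\Tor^R_i(S,M^{\mathrm v})$ for the finitely presented module $S$, observe $\fd_R M^{\mathrm v}\le\id_R M<\infty$, apply Theorem~\ref{main}(a), and conclude by faithfulness of Matlis duality; part (b) then follows by applying $\Hom_R(S,-)$ to an injective resolution. That is exactly the route the paper takes.

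Part (c) has a genuine gap. You assert that ``Bass ascent for Bass numbers'' holds for arbitrary modules over Noetherian rings, and use it to conclude that any prime $p$ with $\mu^N(p,M)>0$ (where $N=\id_R M$) must be maximal. This is false for non--finitely-generated modules, which is precisely the regime this paper is working in. A minimal counterexample: take $R=\mathbb Z$ and $M=\mathbb Q$. Then $M$ is injective, $\id_R M=0=N$, and $\mu^0((0),\mathbb Q)=1>0$, yet $(0)$ is not maximal and $\mu^1((p),\mathbb Q)=0$ for every maximal ideal $(p)$. So the ascent you invoke fails, and there is in general no maximal prime $p$ with $\mu^N(p,M)>0$. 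The subsequent steps (the change-of-rings spectral sequence collapsing by part (a), the decomposition $S/\q\cong(R/\p)^f$ as $R$-modules) are sound, but they all hinge on having a maximal $p$ and a maximal $\q$ lying over it, which you have not produced. The fix is the one the paper uses: instead of trying to choose $p$ maximal in $R$, pick any $p$ with $\Ext^N_{R_p}(k(p),M_p)\ne 0$ and localize $R$, $S$, $M$ at $p$. After localization $p$ is maximal, every $\q\in\Spec S_p$ over $pR_p$ is maximal (since $\phi$ is finite), and your spectral-sequence argument goes through verbatim; the paper instead runs the dual of the computation in Theorem~\ref{main}(c) to identify $\Ext^N_{S_p}\bigl(k(p)\otimes_R S,\Hom_R(S,M)_p\bigr)$ with $\Hom_{R_p}\bigl(S\otimes_R k(p),\Ext^N_{R_p}(k(p),M_p)\bigr)\ne 0$. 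Either version works once you localize first.
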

\begin{proof} For part (a), it suffices to prove the statement locally at every prime ideal (as $S$ is a f.g. $R$-module).  So assume $R$ is local and let $(-)^{\text{v}}$ denote the Matlis dual functor for $R$.  By \cite[Theorem 11.57]{R}, $\Tor_i^R(A, M^{\text{v}})\cong \Ext^i_R(A,M)^{\text{v}}$ for all $i\ge 0$ and all finitely generated $R$-modules $A$.  In particular, we have that $\fd_R M^{\text{v}}=\id_R M<\infty$.  Applying part (a) of Theorem \ref{main}, we have $\Ext^i_R(S,M)^{\text{v}}\cong \Tor^R_i(S,M^{\text{v}})=0$ for all $i>0$.  Thus,
$\Ext^i_R(S,M)=0$ for all $i>0$.  

As in Theorem \ref{main}, part (b) follows readily from part (a).  

To prove (c), it suffices to show that $\id_S \Hom_R (S, M)\ge \id_R M$.  Let $n=\id_R M$.  Then there exists $p\in \Spec R$
such that $\Ext^n_{R_p}(k(p),M_p)\neq 0$.  An argument analagous to the one in the proof of part (c) of Theorem \ref{main} shows that 
$$\Ext^n_{S_p}(k(p)\otimes_R S, \Hom_R(S,M)_p)\cong \Hom_{R_p}(S\otimes_R k(p), \Ext^n_{R_p}(k(p), M_p)),$$
which is nonzero since $k(p)\otimes_R S\neq  0$ and $\Ext^n_{R_p}(k(p),M_p)\neq 0$.  Thus, $\id_S \Hom_R(S,M)\ge n$.

\end{proof}

We now apply our results to the Frobenius map.  The following corollary generalizes Th\'eor\`ems 1.7 and 4.15 of \cite{PS}, which were proved for finitely generated modules.

\begin{cor} \label{frob} Let $R$ be a Noetherian ring of prime characteristic, $M$ an $R$-module, and $e\ge 1$ an integer. 
\begin{enumerate}
\item[(a)] If $\fd_R M<\infty$ then  $\Tor_i^R(R^{(e)},M)=0$ for all $i>0$ and $\fd_{R^{(e)}} R^{(e)}\otimes_R M=\fd_R M$.
\item[(b)] If the Frobenius map of $R$ is finite and $\id_R M<\infty$ then $\Ext^i_R(R^{(e)},M)=0$ for all $i>0$ and $\id_{R^{(e)}} \Hom_R(R^{(e)},M)=\id_R M$.
\end{enumerate}
\end{cor}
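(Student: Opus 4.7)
The plan is to deduce both parts of the corollary by applying Theorem \ref{main} and Corollary \ref{dual2} to the ring homomorphism $\phi := f^e \colon R \to R^{(e)}$. Both umbrella results require two properties of $\phi$: the depth comparison $\depth S_q \ge \depth R_p$ whenever $q \in \Spec S$ and $p = \phi^{-1}(q)$, and, for the equality-of-dimensions statements, the non-vanishing $k(p) \otimes_R S \ne 0$ for all $p \in \Spec R$.

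The first step is to describe the map on prime spectra. Since $R^{(e)}$ equals the ring $R$ itself and $\phi(r) = r^{p^e}$, the primality (hence radicality) of $q \subseteq R^{(e)}$ forces
$$\phi^{-1}(q) = \{\,r \in R : r^{p^e} \in q\,\} = q.$$
Thus $\phi$ induces the identity on $\Spec$, giving $(R^{(e)})_q = R_q$ as rings and hence $\depth (R^{(e)})_q = \depth R_q$; the depth hypothesis is satisfied with equality, so no genuine inequality is at stake. The same observation gives $\phi(p) \subseteq p$, so the extended ideal $pR^{(e)}$, viewed inside the ring $R^{(e)} = R$, is contained in $p$; after localizing at $p$, the module $(R^{(e)})_p / p(R^{(e)})_p$ is a nonzero quotient of the local ring $R_p$ by a proper ideal, which shows $k(p) \otimes_R R^{(e)} \ne 0$.

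Part (a) then follows immediately from parts (a), (b), (c) of Theorem \ref{main}. For part (b), the additional hypothesis that the Frobenius map of $R$ is finite says that $R^{(1)}$ is a finitely generated $R$-module; iterating this observation through the factorization $R \to R^{(1)} \to R^{(2)} \to \cdots \to R^{(e)}$ (each arrow being Frobenius, each module finitely generated over its base by the same hypothesis) shows that $R^{(e)}$ is finitely generated as an $R$-module. This supplies the remaining hypothesis of Corollary \ref{dual2}, whose three parts then yield the claimed $\Ext$ vanishing and injective-dimension equality. The argument is pure verification of hypotheses; the substantive work lives entirely in Theorem \ref{main} and Corollary \ref{dual2}, so there is no genuine obstacle here beyond identifying that $f^e$ satisfies the elementary spectral properties above.
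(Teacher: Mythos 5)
Your proof is correct and follows essentially the same route as the paper's: verify that the Frobenius power $f^e$ induces the identity on $\Spec$, that the depth condition holds (trivially, with equality), and that $k(p)\otimes_R R^{(e)}\neq 0$, then cite Theorem \ref{main} and Corollary \ref{dual2}. The only cosmetic difference is that the paper first reduces to $e=1$ while you handle $f^e$ directly (and you spell out the easy fact that finiteness of $f$ propagates to $f^e$); both are equally valid.
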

\begin{proof}  It suffices to prove both (a) and (b) in the case $e=1$.  If $f:R\to R$ is the Frobenius map, then for all $q\in \Spec R$,  $f^{-1}(q)=q$ and $k(q)\otimes_R R^{(e)}\neq 0$  for all $e\ge 1$. The conclusions now follow from Theorem \ref{main} and Corollary \ref{dual2}.
\end{proof}

\begin{section}{Proof of the converse}
\end{section}

In this section we prove part (b) of Theorem \ref{mainresult}.  Before doing so, we need to establish some results
 on completions of free modules.   Let $(R,m)$ be a Noetherian local ring and $M$ an $R$-module.  
We let $\hat{M}$ denote the $m$-adic completion of $M$; i.e., $\hat{M}:=\varprojlim M/m^nM$.    If $M$ is separated (i.e., $\cap_n m^nM=0$), then the natural map $M\to \hat{M}$ is injective.  We note that for any $n\ge 1$, $M/m^nM\cong \hat{M/m^nM}\cong \hat{M}/\hat{m^nM}$.  In particular, if $M$ is separated, then $\hat{m^nM}\cap M=m^nM$.  Finally, as $m^n$ is finitely generated, we have that
$m^n\hat{M}=\hat{m^nM}$.
\smallskip

\begin{lemma} \label{completion} Let $(R,m)$ be a Noetherian local ring, $F$ a free $R$-module, and $N$ a finitely generated $R$-module.  Let $t:=t(R)$ be the least integer such that $m^t\cap H^0_m(R)=0$.  (Such a $t$ exists by the Artin-Rees lemma.) Then the following hold:
\begin{enumerate}[(a)]
\item $H^0_m(\hat{F})=H^0_m(F)$.
\item For all $n\ge t$,   $m^n\hat{F}\cap H^0_m(\hat{F})=0$.
\item $\hat{F}\otimes_R N\cong  \hat{F\otimes_R N}$.
\end{enumerate}
\end{lemma}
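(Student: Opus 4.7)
The plan is to establish (c) first, then deduce (a) and (b) quickly from it.

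For part (c), since $R$ is Noetherian one may choose a finite presentation $R^a \to R^b \to N \to 0$, and the goal is to exhibit both $\hat F \otimes_R N$ and $\widehat{F \otimes_R N}$ as the cokernel of the same induced map $\hat F^a \to \hat F^b$. The first description is immediate from right-exactness of tensor product. For the second, write $K$ for the image of $F^a$ in $F^b$ and take the inverse limit of the short exact sequences
$$0 \to K/(K \cap m^n F^b) \to F^b/m^n F^b \to (F \otimes_R N)/m^n(F \otimes_R N) \to 0.$$
The transition maps in the leftmost system are surjective (as $m^{n+1} F^b \cap K \subseteq m^n F^b \cap K$), so $\varprojlim{}^1$ vanishes and the sequence stays short exact after completion. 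A parallel surjectivity check identifies the image of $\hat F^a \to \hat F^b$ with $\varprojlim_n K/(K \cap m^n F^b)$, matching the kernel on the right.

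For (a), the Artinian case is trivial since $\hat F = F$, so I assume $\dim R \ge 1$; recall that the $m$-adic completion $\hat F$ of a free (hence flat) module over the Noetherian ring $R$ is itself flat. Since $F$ is flat and $H^0_m(R)$ is finitely generated, one has $H^0_m(F) = H^0_m(R) \otimes_R F = H^0_m(R) \cdot F$. Applying (c) with $N = H^0_m(R)$ yields $H^0_m(R) \otimes_R \hat F \cong \widehat{H^0_m(F)} = H^0_m(F)$, the last equality because $H^0_m(F)$ is annihilated by $m^t$ and is therefore already $m$-adically complete. It then suffices to identify $H^0_m(\hat F)$ with the submodule $H^0_m(R) \cdot \hat F$; one inclusion is clear, and the reverse is obtained by tensoring $0 \to H^0_m(R) \to R \to R/H^0_m(R) \to 0$ with the flat module $\hat F$ and noting that $(R/H^0_m(R)) \otimes_R \hat F$ has no $m$-torsion. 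This last point holds because $R/H^0_m(R)$ has positive depth (some $x \in m$ is a nonzerodivisor on it), and flatness of $\hat F$ preserves this. Part (b) now follows by combining (a) with the separateness of $F$ (Krull's intersection theorem coordinate-wise gives $F \cap m^n \hat F = m^n F$), reducing the statement to $m^n F \cap H^0_m(F) = 0$; this equals $(m^n \cap H^0_m(R)) \cdot F = 0$ for $n \ge t$ by flatness of $F$ and the defining property of $t$.

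The principal obstacle is the right-exactness verification in (c): the standard Artin--Rees results apply only to finitely generated modules, but here the sequence $F^a \to F^b \to F \otimes_R N \to 0$ consists of non-finitely-generated modules. The crucial point rescuing the argument is the surjectivity of the transition maps in the induced filtration $\{K \cap m^n F^b\}$ on $K$, which forces $\varprojlim{}^1 = 0$ and lets exactness survive passage to the inverse limit.
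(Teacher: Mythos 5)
Your reorganization --- proving (c) first, then deducing (a) and (b) via flatness of $\hat{F}$ and a short exact sequence argument --- is a genuinely different route from the paper's, which proves (a) and (b) directly by a coordinate-wise computation in $F=\bigoplus_\alpha R$ and only then does (c) separately. Conceptually your ordering is appealing, and your derivations of (a) and (b) from (c) are correct (granted the nontrivial but standard fact that the $m$-adic completion of a flat module over a Noetherian local ring is flat, which the paper's proof of this lemma does not use).

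However, there is a gap in your proof of (c), precisely at the point you flag as the ``principal obstacle.'' The $\varprojlim^1=0$ argument you give correctly identifies
$$\ker\bigl(\hat{F^b}\to\hat{F\otimes_R N}\bigr)=\varprojlim_n K/(K\cap m^nF^b),$$
because the transition maps in the system $\{K/(K\cap m^nF^b)\}$ are surjective. But to conclude that the two cokernels coincide you must also show
$$\operatorname{im}\bigl(\hat{F^a}\to\hat{F^b}\bigr)=\varprojlim_n K/(K\cap m^nF^b),$$
and your ``parallel surjectivity check'' does not do this. The relevant inverse system here is not $\{K/(K\cap m^nF^b)\}$ but rather the system of kernels $A_n$ of $F^a/m^nF^a\twoheadrightarrow K/(K\cap m^nF^b)$, which is $A_n=L_n/m^nF^a$ with $L_n:=\{a\in F^a:\phi(a)\in m^nF^b\}$. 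Since $L_{n+1}\subsetneq L_n$ in general, the transition maps $A_{n+1}\to A_n$ are \emph{not} surjective, so $\varprojlim^1 A_n=0$ is not automatic. One must instead verify a Mittag--Leffler condition, and showing that the images of $A_{n+k}$ in $A_n$ stabilize amounts to an Artin--Rees estimate of the form $m^{n+k}F^b\cap K\subseteq m^nK$ for $k$ large. Because $F^b$ and $K$ are not finitely generated, the Artin--Rees lemma does not apply directly; one must apply it to the finitely generated module $K_0=\operatorname{im}(R^a\to R^b)$ and then push it through the direct sum decomposition $F^b=\bigoplus_\alpha R^b$, $K=\bigoplus_\alpha K_0$, exactly as the paper does. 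Your closing paragraph implies the surjectivity of transition maps in $\{K\cap m^nF^b\}$ dispatches the entire issue; it only handles the \emph{first} inverse system, not the one controlling the image of $\hat{F^a}$. Once the coordinate-wise Artin--Rees bound is inserted, your argument closes; without it, the key identification of $\hat{F}\otimes_R N$ with $\hat{F\otimes_R N}$ is unproven.
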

\begin{proof} Let $X$ be a basis for $F$.  Then $F\cong \bigoplus_{\alpha\in X} R_{\alpha}$, where $R_{\alpha}=R$ for all $\alpha$.
We write this as $F=\oplus_{\alpha} R$ for short.  Note that $m^nF=\oplus_{\alpha} m^n$ for all $n\ge 1$ and $H^0_m(F)=\oplus_{\alpha} H^0_m(R)$.  In particular, $F$ is separated.  We also observe that the topology on $H^0_m(F)$ induced from the $m$-adic topology on $F$ coincides with the $m$-adic topology on $H^0_m(F)$. (One can check this on each component.) 
Thus, $\hat{F/H^0_m(F)}\cong \hat{F}/\hat{H^0_m(F)}$.    Clearly, $\hat{H^0_m(F)}=H^0_m(F)$ and $H^0_m(F)\subseteq H^0_m(\hat{F})$.  Therefore, to prove (a) it suffices to show that $H^0_m(\hat{F/H^0_m(F)})=0$.   If $R=H^0_m(R)$, this is clear. Otherwise,
we can replace $R$ by $R/H^0_m(R)$ and assume $H^0_m(R)=0$.  In this case, $H^0_m(F)=0$ and it suffices to prove that $H^0_m(\hat{F})=0$.  Let $x\in m$ be a non-zero-divisor on $R$.  Then 
$0\to F\xrightarrow{\mu_x} F$ is exact, where $\mu_x$ is multiplication by $x$.  We claim that $0\to \hat{F}\xrightarrow{\mu_x} \hat{F}$ is exact.  To see this, it suffices to show that the topology induced by $\mu_x^{-1}(m^nF)$ on $F$ coincides with the $m$-adic topology on $F$.   As $x$ is a non-zero-divisor
on $R$, there exists (by the Artin-Rees lemma) an integer $t$ such that $(m^n:_Rx)\subseteq m^{n-t}$ for all $n\ge t$.  Then
$\mu_x^{-1}(m^nF)=\oplus_{\alpha} (m^n:_Rx)\subseteq m^{n-t}F$ for all $n\ge t$.  Hence the topologies coincide, and $x$ is a non-zero-divisor on $\hat{F}$.    Thus, $H^0_m(\hat{F})=0$ and part (a) is proved.

Using part (a) and that $m^n\hat{F}\cap {F}=m^nF$, we have for all $n\ge t$
\begin{align*}
m^n\hat{F}\cap H^0_m(\hat{F})&=m^nF\cap H^0_m(F)\\ &=\oplus_{\alpha} (m^n\cap H^0_m(R))\\ &=0.
\end{align*}
Thus, (b) is proved.

We now prove part (c).  Let $R^r\xrightarrow{\phi} R^s\to N\to 0$ be a presentation for $N$ .   Let $K=\im \phi$.  By the Artin-Rees lemma, there exists an integer $t\ge 1$ such that $m^nR^s\cap K\subseteq m^{n-t}K$ for all $n\ge t$.   As $F$ is free, we have
$0\to F\otimes_R K\to F\otimes_R R^s\to F\otimes_R N\to 0$ is exact.   Then for $n\ge t$,
\begin{align*} 
m^n(F\otimes_R R^s)\cap (F\otimes_R K)&= (\oplus_{\alpha} m^nR^s)\cap \oplus_{\alpha}K \\
&=\oplus_{\alpha}(m^nR^s\cap K )\\
&\subseteq \oplus_{\alpha} m^{n-t}K\\
&\subseteq m^{n-t}(F\otimes_R K).
\end{align*}
Thus, the induced and $m$-adic topologies on $F\otimes_R K$ coincide.  Therefore,
$$0\to \hat{F\otimes_R K}\to \hat{F\otimes_R R^s}\to \hat{F\otimes_R N}\to 0$$
is exact.  Composing with the surjection $\hat{F\otimes_R R^r}\xrightarrow{\hat{1\otimes \phi}}\hat{F\otimes_R K}$ and noting that $\hat{A}\otimes_R R^n$ is naturally isomorphic to $\hat{A^n}$ for any $R$-module $A$ and any positive integer $n$, we obtain the  commutative diagram:
$$
\begin{CD}
\hat{F\otimes_R R^r} @>\hat{1\otimes \phi}>>\hat{F\otimes_R R^s} @>>> \hat{F\otimes_R N}@>>> 0 \\
@VV\cong V @VV\cong V @. @. \\
\hat{F}\otimes_R R^r@>1\otimes \phi>>  \hat{F}\otimes_R R^s @>>> \hat{F}\otimes_R N@>>>0. \\
\end{CD}
$$
Hence, $\hat{F\otimes_R N}\cong \hat{F}\otimes_R N$ by the Five Lemma.
\end{proof}

Before proving part (b) of Theorem \ref{mainresult}, we introduce some notation used in the proof.  For $e\ge 1$ we let $m_e$ denote the maximal ideal of $R^{(e)}$.  For $x\in R$, we let $x_e$ denote the element $x$ considered as an element in $R^{(e)}$.  Thus, $xR^{(e)}=x_e^{p^e}R^{(e)}$.  Given an $R$-module $N$ we let $N^{(e)}$ denote the $R^{(e)}$-module $R^{(e)}\otimes_R N$.  If $\psi:L\to N$ is an $R$-module homomorphism, $\psi^{(e)}$ denotes the map $1\otimes_R \psi: L^{(e)}\to N^{(e)}$. 

\begin{thm} \label{converse}  Let $R$ be a Noetherian ring of prime characteristic $p$ and $M$ an $R$-module.  Assume $R$ has finite Krull dimension.  Suppose $\Tor_i^R(R^{(e)}, M)=0$ for all $i>0$ and infinitely many integers $e$.  Then $\pd_R M<\infty$.
\end{thm}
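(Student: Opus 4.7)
The plan is to adapt Herzog's Frobenius-acyclicity argument from finitely generated free resolutions to minimal flat resolutions, with the completions-of-free-modules technology of Lemma \ref{completion} as the crucial bridge.

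First I would reduce to the case $R$ is local. Since $(R^{(e)})_p \cong (R_p)^{(e)}$, the Tor-vanishing hypothesis localizes to give $\Tor_i^{R_p}((R_p)^{(e)}, M_p) = 0$ for each $p \in \Spec R$ and infinitely many $e$. To recover $\pd_R M < \infty$ from pointwise statements, I would invoke the Jensen--Raynaud--Gruson theorem together with the bound $\dim R < \infty$: over a local ring, finite flat dimension coincides with projective dimension and is bounded by $\depth R$, so a uniform global bound on $\pd$ is automatic once local finiteness is established. I may therefore assume $(R,m,k)$ is local.

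Next I would replace $M$ by its cotorsion envelope $C_R(M)$. The cokernel of $M\hookrightarrow C_R(M)$ is flat, so the Tor-vanishing hypothesis transfers to $C_R(M)$, and by Theorem \ref{EX}(a) the flat dimension is preserved. With $M$ now cotorsion, let $\mathbf F$ be its minimal flat resolution. Each $F_i$ is flat and cotorsion, hence decomposes as $\prod_{p\in\Spec R} T_i(p)$ with top component $T_i(m)\cong \hat{R^{B_i}}$ where $B_i=\pi_i(m,M)=\dim_k\Tor_i^R(k,M)$ by Theorem \ref{EX}(b). The hypothesis then makes $R^{(e)}\otimes_R\mathbf F$ acyclic (in fact a flat resolution of $R^{(e)}\otimes_R M$) for infinitely many $e$.

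The core would be a Herzog-style contradiction. Assuming $\fd_R M=\infty$, the minimality of $\mathbf F$ forces the restricted differentials on the top components to land in $m\cdot\hat{R^{B_{i-1}}}$; tensoring with $R^{(e)}$ raises the resulting ``matrix entries'' to $p^e$-th powers via the Frobenius module structure (the notation $xR^{(e)}=x_e^{p^e}R^{(e)}$ set up just before the statement), so these entries lie arbitrarily deep in $m_e$ as $e$ grows. Lemma \ref{completion}---especially part (c), which commutes completion with tensoring by finitely generated modules such as $R/m^{[p^e]}$, and part (b), which gives separation of the $m$-torsion---is what makes this ``high powers of $m_e$'' manipulation rigorous inside $\hat{R^{B_i}}$. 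Simultaneously, Lemma \ref{lem} guarantees infinite depth for the non-top pieces $T_i(p)$ with $\pi(m,T_i(p))=0$. Applying Proposition \ref{ps} (Peskine--Szpiro acyclicity) to the Frobenius-twisted complex should then force $B_i=0$ for $i>\depth R$ and, by the analogous analysis on each prime component, $\pi_i(p,M)=0$ for all $p$ and $i$ sufficiently large, contradicting $\fd_R M=\infty$.

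The main obstacle I expect is executing this ``raising to $p^e$-th powers'' contradiction outside the finitely generated world: Nakayama's lemma does not apply directly to the big completed modules $\hat{R^{B_i}}$, and no counting of minimal generators is available, so vanishing must be extracted from acyclicity combined with the structural high-$m_e$-power information and the depth estimate of Lemma \ref{lem}. A secondary difficulty is that Proposition \ref{localTor}, the natural tool for transferring Tor-vanishing through the colocalization $\Hom_R(R_p,-)$, requires $R^{(e)}$ to be finitely generated over $R$; since we do not assume the Frobenius is finite, the non-top primes $p\ne m$ cannot be handled by a clean inductive descent to lower Krull dimension, and must instead be addressed through the product decomposition of the $F_i$ and the depth estimates directly.
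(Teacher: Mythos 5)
Your outline correctly anticipates several ingredients of the paper's proof (reduction to a local cotorsion module, minimal flat resolutions, the completion-of-free-module structure of the cotorsion flat components, and Lemma~\ref{completion}), but there are two gaps that would prevent the argument from closing, and your final contradiction is pointed at the wrong mechanism.

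First, you explicitly flag the difficulty that Proposition~\ref{localTor} requires $R^{(e)}$ to be finitely generated and then propose to route around it ``through the product decomposition of the $F_i$ and the depth estimates directly.'' The paper does not route around it: it eliminates the difficulty by the standard faithfully flat base change $R\to T$ with $(T,n)$ local and each $T^{(e)}$ finite over $T$ (cf.\ \cite[Section~3]{K}). Since $\fd_R M = \fd_T(T\otimes_R M)$ and $\Tor^T_i(T^{(e)}, T\otimes_R M)\cong T^{(e)}\otimes_{R^{(e)}}\Tor^R_i(R^{(e)},M)$, one may assume the Frobenius is finite. This reduction is what makes Proposition~\ref{localTor} available, which in turn is what lets you choose $q$ minimal with $\fd_{R_q}\Hom_R(R_q,M)=\infty$ and replace $(R,M)$ by $(R_q,\Hom_R(R_q,M))$, ensuring $\fd_{R_{q'}}\Hom_R(R_{q'},M)<\infty$ for all $q'\neq m$. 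That minimal choice is essential and missing from your sketch: it is what forces $\pi_i(q',M)=0$ for $i\geq \dim R$ and $q'\neq m$, so that $F_i$ in high degrees is \emph{exactly} the $m$-adic completion of a free $R$-module $\hat{R^{B_i}}$ with no contribution from other primes. Without that, the components $T_i(p)$ for $p\neq m$ are not gone and your planned ``analogous analysis on each prime component'' has no mechanism.

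Second, the contradiction itself is not a Peskine--Szpiro acyclicity argument. Proposition~\ref{ps} requires $\depth T_i\geq i$ for all $i$, but the top component $\hat{R^{B_i}}$ (or, after tensoring, the completed free $S$-module $V_i$) has depth equal to $\depth S$, not growing with $i$, so the hypothesis fails precisely at the piece carrying the obstruction; Lemma~\ref{lem} only gives infinite depth to the components at primes $p\neq m$. What the paper actually does, after tensoring the Frobenius-twisted resolution with $S = R^{(e)}/(\mathbf x_e)$ for a maximal regular sequence $\mathbf x$ and choosing $e$ large enough that $m^{[p^e]}\subseteq m^t$, is a separation argument: the minimality $\phi_i(F_i)\subseteq mF_{i-1}$ yields $B_i\subseteq n^t V_i$, while Lemma~\ref{completion}(a) gives $H^0_n(V_i)\neq 0$ (because $\depth S=0$) and Lemma~\ref{completion}(b) gives $n^t V_i\cap H^0_n(V_i)=0$. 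Chasing these through the short exact sequences $0\to B_i\to V_i\to B_{i-1}\to 0$ forces $H^0_n(V_i)=0$ for $i\geq d+1$, a contradiction. So the punchline is an $H^0_n$-separation argument inside completions, not an application of Proposition~\ref{ps}; and the crucial missing step at the start is the reduction to finite Frobenius.
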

\begin{proof} We first note that it suffices to prove $\fd_R M<\infty$ by the Jensen-Raynaud-Gruson theorem.    Let $d:=\dim R$.  As $\fd_RM<\infty$ if and only if $\Tor^{R_q}_{d+1}(M_q,N_q)=0$ for every $R$-module $N$ and $q\in \Spec R$, we can assume $R$ is local.  By a standard argument, there exists a faithfully flat local $R$-algebra $(T,n)$ such that $T^{(e)}$ is f.g. as a $T$-module for all $e$ (e.g, \cite[Section 3]{K}).  Note that $\fd_R M=\fd_T (T\otimes_R M)$ and $\Tor_i^T(T^{(e)}, T\otimes_R M)\cong T^{(e)}\otimes_{R^{(e)}} \Tor_i^R(R^{(e)},M)$ for all $i$ and $e$.  Hence, by replacing $R$ by $T$, we may assume $R^{(e)}$ is a finitely generated $R$-module for all $e$.

Let $\phi:F\to M$ be a flat cover and $K=\ker \phi$.  By \cite[Lemma 2.2]{E},  $K$ is cotorsion.  Note that $\fd_R K<\infty$ if and only if $\fd_R M<\infty$ and $\Tor_i^R(R^{(e)}, K)\cong \Tor_{i+1}^R(R^{(e)}, M)$ for all $i\ge 1$ and all $e$. Hence, we may assume that $M$ is cotorsion.

We assume that $\fd_R M=\infty$ and derive a contradiction.  Choose $q\in \Spec R$ minimal with respect to the property that $\fd_{R_q} \Hom_{R}(R_q,M)=\infty$.   By Propostion \ref{localTor}, if $\Tor_i^R(R^{(e)}, M)=0$ for some $e$ and all $i>0$, then $\Tor_i^{R_q}(R_q^{(e)}, \Hom_R(R_q, M))=0$ for all $i>0$.  Note also that $\Hom_R(R_q,M)$ is a cotorsion $R_q$-module by Lemma \ref{cotorsion1}(a) and that $R_q^{(e)}$ is finitely generated as an $R_q$-module for all $e\ge 1$.
Thus, by replacing $R$ with $R_q$ and $M$ with $\Hom_R(R_q,M)$,  we may assume $\fd_R M=\infty$ and $\fd_{R_q} \Hom_R(R_q, M)<\infty$ for all $q\neq m$.

Let $\mathbf{F}$ be a minimal flat resolution of $M$ and let $\phi_i:F_{i}\to F_{i-1}$ denote the differentials of $\mathbf F$.  Since $M$ is cotorsion, so is $F_i$ for all $i$.    By the proof of \cite[Theorem 2.2]{EX}, $\phi_i\otimes_R 1_{R/m}=0$ for all $i$; i.e., $\phi_i(F_i)\subseteq mF_{i-1}$ for all $i$.  Then $\phi_i^{(e)}(F_i^{(e)})\subseteq m_e^{[p^e]}F_{i-1}^{(e)}$ for all $i$ and $e$.

Let $s:=\depth R$ and $\mathbf x\in m$ a regular sequence on $R$ of length $s$.   Let $t:=t(R/(\mathbf x))$ as defined in Lemma \ref{completion}, and let $e$ be an integer such that $m^{[p^e]}\subseteq m^t$ and  $\Tor_i^R(R^{(e)}, M)=0$ for all $i>0$.  Let $S$ denote the $R^{(e)}$-algebra $R^{(e)}/(\mathbf x_e)$ and $n=m_eS$, the maximal ideal of $S$. 
Then $H^0_n(S)\neq 0$ and $n^t\cap H^0_n(S)=0$ by definition of $t$.  Since $\pd_{R^{(e)}} S=s$, we have that  $\Tor_i^{R^{(e)}}(S, M^{(e)})=0$ for all $i>s$.  As $\mathbf{F}^{(e)}$ is an $R^{(e)}$-flat resolution of $M^{(e)}$, we obtain that
$H_i(S\otimes_{R^{(e)}} \mathbf F^{(e)})=0$ for all $i>s$.  For each $i\ge 0$ let $C_i=\im \phi_{i+1}$.  Then for all $i\ge 1$ we have an exact sequence of $R^{(e)}$-modules
$$0\to C_{i}^{(e)}\to F_{i}^{(e)}\to C_{i-1}^{(e)}\to 0.$$
From the remarks above, we have that $C_i^{(e)}=\phi_{i+1}^{(e)}(F_{i+1}^{(e)})\subseteq m_e^{[p^e]}F_i^{(e)}\subseteq m_e^tF_i^{(e)}$
for all $i$.   Since $H_i(S\otimes_R \mathbf F^{(e)})=0$ for all $i>s$,  we have that
$$0\to S\otimes_{R^{(e)}} C_{i}^{(e)}\to S\otimes_{R^{(e)}} F_{i}^{(e)}\to  S\otimes_{R^{(e)}} C_{i-1}^{(e)}\to 0$$
is exact for all $i>s-1$.  Note that $S\otimes_{R^{(e)}} C_i^{(e)}\subseteq n^t(S\otimes_{R^{(e)}} F_i^{(e)})$ for all $i>s-1$.

By our assumptions, we have that  $\fd_{R_q} \Hom_R (R_q, M)\le d-1$ for all $q\neq m$, where $d=\dim R$.
By Theorem \ref{EX}(b),  this implies that
$\pi(q, F_i)=\pi_i(q, M)=\pi_i(qR_q, \Hom_R(R_q,M))=0$ for all $i\ge d$ and all $q\neq m$.  Consequently, for $i\ge d$, $F_i$ is the completion of a free $R$-module $G_i$ of rank $r_i:=\pi_i(m,M)$.  By Lemma \ref{completion}(c), for $i\ge d$ we have that  $F_i^{(e)}\cong R^{(e)}\otimes_R \hat{G_i}\cong \hat{G_i^{(e)}}$ and $S\otimes_{R^{(e)}} F_i^{(e)}\cong \widehat{S\otimes_{R^{(e)}} G_i^{(e)}}$, which is the $n$-adic completion of a free $S$-module of rank $r_i$.
Note that since $\fd_RM=\infty$, $r_i\neq 0$ for all $i\ge d$.   For all $i\ge d-1$, let $B_i=S\otimes_{R^{(e)}} C_i^{(e)}$ and $V_i=S\otimes_{R^{(e)}} F_i^{(e)}$.  From above, for $i\ge d$ we have exact sequences of $S$-modules 
$$0\to B_i \to V_i \to B_{i-1}\to 0$$
where $V_i$ is the completion of a  (nonzero) free $S$-module and $B_i\subseteq n^tV_i$.  In particular, by Lemma \ref{completion}(a), $H^0_n(V_i)\neq 0$ for all $i\ge d$.  By Lemma \ref{completion}(b), $H^0_n(B_i)\subseteq n^tV_i\cap H^0_n(V_i)=0$ for all $i\ge d$.  This implies that $H^0_n(V_i)=0$ for $i\ge d+1$, a contradiction.
\end{proof}

In the case the Frobenius map is finite, we obtain the converse to part (b) of Corollary \ref{frob}.  This generalizes \cite[Satz 5.2]{H}, which was proved in the case the module $M$ is finitely generated.

\begin{cor} \label{dual3} Let $R$ be a Noetherian ring of prime characteristic.  Assume that $R$ has finite Krull dimension and that the Frobenius map is finite.  Let $M$ be an $R$-module and suppose that $\Ext^i_R(R^{(e)}, M)=0$ for all $i>0$ and for infinitely many integers $e$.  Then $\id_RM<\infty$.
\end{cor}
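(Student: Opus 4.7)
The plan is to mirror the strategy used for Corollary \ref{dual2}: reduce to the local case, apply Matlis duality to convert the hypothesized $\Ext$-vanishing on $M$ into $\Tor$-vanishing on the Matlis dual $M^{\text{v}}$, and then invoke Theorem \ref{converse}.

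First, since the Frobenius is finite, $R^{(e)}$ is a finitely generated $R$-module, so $\Ext^i_R(R^{(e)},M)$ commutes with localization. For every $\p\in\Spec R$, the hypothesis therefore descends to $\Ext^i_{R_\p}((R_\p)^{(e)},M_\p)=0$ for all $i>0$ and the same infinite set of $e$. Fixing such a $\p$, I pass to the Noetherian local ring $R_\p$ (which still has finite Krull dimension) and set $(-)^{\text{v}}=\Hom_{R_\p}(-,E_{R_\p}(k(\p)))$. The Ext--Tor duality \cite[Theorem 11.57]{R}, applied to the finitely generated module $(R_\p)^{(e)}$, gives
$$\Tor^{R_\p}_i((R_\p)^{(e)},M_\p^{\text{v}})\cong \Ext^i_{R_\p}((R_\p)^{(e)},M_\p)^{\text{v}}=0$$
for all $i>0$ and infinitely many $e$. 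Theorem \ref{converse} then yields $\pd_{R_\p}M_\p^{\text{v}}<\infty$ and in particular $\fd_{R_\p}M_\p^{\text{v}}<\infty$, and Proposition \ref{EXnumbers} bounds this by $\dim R_\p\le\dim R$.

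Next I would translate this flat-dimension bound on the Matlis dual back into an injective-dimension bound on $M_\p$. Setting $n:=\fd_{R_\p}M_\p^{\text{v}}\le \dim R$, for every finitely generated $R_\p$-module $A$ the same duality gives $\Ext^{n+1}_{R_\p}(A,M_\p)^{\text{v}}\cong \Tor^{R_\p}_{n+1}(A,M_\p^{\text{v}})=0$. Since Matlis duality over a local ring is faithful (any nonzero module has a nonzero cyclic quotient $R_\p/I$ that surjects onto $k(\p)\hookrightarrow E$), this forces $\Ext^{n+1}_{R_\p}(A,M_\p)=0$ for every finitely generated $A$, and hence $\id_{R_\p}M_\p\le n\le \dim R$. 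Since the Bass numbers satisfy $\mu_i(\p,M)=\mu_i(\p R_\p,M_\p)$, taking the supremum over $\p$ will yield $\id_R M\le\dim R<\infty$.

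The step I expect to be most delicate is securing a \emph{uniform} bound over all $\p$: merely knowing that each $\id_{R_\p}M_\p$ is finite would not yield $\id_R M<\infty$. Proposition \ref{EXnumbers} is precisely what supplies this uniformity, but only because Theorem \ref{converse} has first placed us in the finite-flat-dimension regime. So the crucial link in the chain is Ext-vanishing $\Rightarrow$ Tor-vanishing for $M^{\text{v}}$ $\Rightarrow$ Theorem \ref{converse} $\Rightarrow$ Proposition \ref{EXnumbers}, which globalizes the local bound.
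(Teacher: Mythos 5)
Your proposal is correct and follows essentially the same route as the paper's proof: reduce to the local case (using that $R^{(e)}$ is finitely generated and $\dim R<\infty$), apply Matlis duality to turn the $\Ext$-vanishing into $\Tor$-vanishing for $M^{\text{v}}$, invoke Theorem \ref{converse}, and translate back. You merely spell out the globalization step and the duality bookkeeping in more detail than the paper, which compresses this into the identity $\id_R M = \fd_R M^{\text{v}}$ quoted from the argument of Corollary \ref{dual2}(a).
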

\begin{proof} Since $R^{(e)}$ is a finitely generated $R$-module for all $e$ and $\dim R<\infty$, we may assume $R$ is local.  Let $(-)^{\text{v}}$ be the Matlis dual functor.  Then, as in the proof of Corollary \ref{dual2}(a), $\id_R M=\fd_R M^{\text{v}}$.  By \cite[Theorem 11.57]{R}, $\Tor_i^R(R^{(e)}, M^{\text{v}})\cong \Ext^i_R(R^{(e)},M)^{\text{v}}=0$ for all $i> 0$ and infinitely many $e$.
By Theorem \ref{converse}, $\id_RM= \fd_RM^{\text{v}}<\infty$.
\end{proof}

\end{document}